\begin{document}

\title {Uniqueness and radial symmetry of minimizers for a nonlocal variational problem}

\author{ Orlando Lopes}

\address{Orlando Lopes \newline
IMEUSP- Rua do Matao, 1010, CEP: 05508-090, Sao Paulo, SP, Brazil}
\email{olopes.usp@gmail.com}

\subjclass[2000]{34A34}

\keywords{Radial symmetry, nonlocal variational problems}

\begin{abstract}

For $-n<p<0$ and $ 0<q$ and
$$K(x)=\frac{\|x\|^q}{q} - \frac{\|x\|^p}{p}$$
  the existence of minimizers of
$$E(u)=\int_{R^n\times R^n} K(x-y) u(x)u(y) \,dx \,dy$$
under
$$ \int_{R^n}u(x) \, dx=m>0; \quad 0 \leq  u(x) \leq M$$
 with given $m$ and $M$ has been proved. In this  paper we show that for $2\leq q \leq 4$  and
  except for translation, minimizer is unique and hence, radially symmetric. Applications are given for powers $(p,q)$ equal to $(-1,2)$, $(-1,3)$ and $(-1,4).$ As we will see, the shape of the minimizer depends on the ratio $m/M.$
 \end{abstract}

 \maketitle
\numberwithin{equation}{section}
\newtheorem{theorem}{Theorem}[section]
\newtheorem{lemma}[theorem]{Lemma}
\newtheorem{definition}[theorem]{Definition}
\newtheorem{remark}[theorem]{Remark}
\allowdisplaybreaks

\section{Introduction and Statement of the Result}

Functionals of the type $$E(u)=\int_{R^n\times R^n} K(x-y) u(x)u(y) \,dx \,dy$$
where $K(x)$ is given above,
are connected with the modelling of several phenomena such as self-assembly/aggregation models
 (\cite{choksi1} and \cite{fetecau}) and flocking of birds and  some other condensation phenomenon (\cite{burchard}).

If $M>0$ and $m>0$ are given and $\|x\|$ denotes the euclidean norm in $R^n$,  following
 \cite{choksi1} we define the set
\begin{equation}
{\mathcal A}=\{u\in L^1(R^n) \cap L^{\infty}(R^n): u\geq 0, \quad \|u\|_{\infty}\leq M \quad \rm{and} \quad \int_{R^n} u(x) \, dx=m\}
\end{equation}
and the functional
\begin{eqnarray}
&&E(u)=F(u)+G(u)\nonumber \\
&=&\int_{R^n\times R^n} \frac{\|x-y\|^q}{q}  u(x)u(y) \,dx \,dy-\int_{R^n\times R^n}\frac{ \|x-y\|^p}{p} u(x)u(y) \,dx \,dy.
\end{eqnarray}
For $-n<p<0$ and $ 0<q$ the existence of minimizers of $E$ on ${\mathcal A}$ has been proved in
 \cite{choksi1}.
So far, uniqueness and radial symmetry of the minimizer has been proved for  $q=2$ only. The uniqueness is a consequence of the convexity of $E(u)$ on the admissible set.

In \cite{frank1} the existence of certain classes of solutions is proved for $p=1$ and $n=3.$

For $n=3$, $p=1$ and $q\geq 2$, it has been proved that the radially symmetric equilibrium is unique and compactly  supported (see \cite{fetecau}).

Our main result is the following:
\begin{theorem} If $2\leq q \leq 4$ and $-n<p<0$ then, except for translation,  minimizer is unique. In particular, it is radially symmetric.
 \end{theorem}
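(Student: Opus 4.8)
The plan is to prove uniqueness by establishing that the functional $E$ is strictly convex on the admissible set $\mathcal{A}$, modulo translations. Since $\mathcal{A}$ is convex and $E$ is quadratic, $E(u) = \frac{1}{2}\langle \mathcal{L}u, u\rangle$ where $\mathcal{L}$ is the convolution operator with kernel $K$. Convexity of $E$ on $\mathcal{A}$ amounts to showing that for any $u_0, u_1 \in \mathcal{A}$, the second difference is nonnegative: writing $v = u_1 - u_0$, we need $\int_{R^n \times R^n} K(x-y) v(x) v(y)\,dx\,dy \geq 0$ for all admissible differences $v$. Note that any such $v$ satisfies $\int_{R^n} v(x)\,dx = 0$ since both $u_0$ and $u_1$ have mass $m$. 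So the heart of the matter is the sign of the quadratic form $Q(v) = \int\int K(x-y)v(x)v(y)\,dx\,dy$ on the subspace of mean-zero functions (with appropriate integrability).

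**The Fourier-side analysis:**

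By the convolution theorem, $Q(v) = c_n \int_{R^n} \widehat{K}(\xi) |\widehat{v}(\xi)|^2\,d\xi$ in an appropriate distributional sense. The Riesz kernel $\|x\|^p$ for $-n < p < 0$ has Fourier transform a positive multiple of $\|\xi\|^{-n-p}$, so $-\frac{1}{p}\|x\|^p$ (with $p < 0$, so $-1/p > 0$) transforms to a \emph{positive} multiple of $\|\xi\|^{-n-p}$ — this is the well-known positive-definiteness of the Riesz potential, and it contributes a nonnegative term to $Q$. The delicate term is $\frac{1}{q}\|x\|^q$ with $q > 0$: its distributional Fourier transform is (a multiple of) $\|\xi\|^{-n-q}$, but now the multiplicative constant involves $\Gamma$-functions that can change sign, and moreover this is a distribution requiring regularization near $\xi = 0$ because $\|x\|^q$ grows at infinity. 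The key computation is to determine the sign of the constant $\gamma(q)$ such that $\widehat{\|x\|^q} = \gamma(q)\|\xi\|^{-n-q}$ on test functions with $\widehat{v}(0) = 0$; the constraint $2 \le q \le 4$ should be exactly what forces the relevant combination of Gamma factors (something like $\gamma(q) \propto -\Gamma(\tfrac{n+q}{2})/\Gamma(-\tfrac{q}{2})$, whose sign depends on which interval $q$ lies in between consecutive even integers) to have the favorable sign — or at least not to spoil positivity when combined with the Riesz term. The mean-zero condition $\widehat v(0)=0$ is what lets us ignore the problematic point mass / polynomial ambiguity at the origin.

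**Assembling convexity and extracting uniqueness:**

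Once $Q(v) \ge 0$ is established for all admissible mean-zero $v$, convexity of $E$ on $\mathcal{A}$ follows, so the set of minimizers is convex. For uniqueness up to translation I would then need strict convexity on the quotient: $Q(v) = 0$ should force $\widehat{v}$ to be supported at $\xi = 0$, i.e. $v$ a polynomial, hence $v \equiv 0$ by integrability — but this is \emph{too strong}, since translates of a minimizer give $v \ne 0$ with $E$ unchanged only along the segment if $E$ were affine there, which it is not for a strictly convex functional. So more care is needed: the correct statement is that if $u_0$ and $u_1$ are both minimizers, then $\frac{1}{2}(u_0+u_1)$ is a minimizer with $E(\tfrac{u_0+u_1}{2}) = \tfrac12 E(u_0) + \tfrac12 E(u_1) - \tfrac14 Q(u_1-u_0)$, forcing $Q(u_1 - u_0) = 0$; then one shows $Q(v) = 0$ with $\int v = 0$ implies $v = 0$, UNLESS the zero set of $\widehat K$ is nontrivial. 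The translation invariance enters because... actually it does not produce nonuniqueness of $E$-value from a convexity failure — translates are distinct minimizers but the segment between $u$ and its translate need not lie in the minimizer set, so strict convexity on mean-zero functions (if it holds) would be contradicted unless $v=0$. Thus the real claim must be: $Q(v) > 0$ for $v \ne 0$ mean-zero and integrable — and then uniqueness is immediate with translation entering only to say the ``unique'' minimizer is determined up to its center of mass. I would prove this strict positivity by showing $\widehat K(\xi) > 0$ for $\xi \ne 0$ (from the Gamma-function sign analysis under $2 \le q \le 4$), so that $Q(v) = 0 \Rightarrow \widehat v \equiv 0$ off the origin $\Rightarrow v$ polynomial $\Rightarrow v = 0$.

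**Main obstacle:**

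The principal difficulty is the rigorous Fourier-analytic treatment of the \emph{growing} kernel $\|x\|^q$: justifying the convolution-to-multiplication identity on the non-closed, non-$L^2$ class of admissible differences, handling the tempered-distribution Fourier transform of $\|x\|^q$ with its regularization at the origin, and — most importantly — pinning down the sign of its Fourier multiplier and verifying that $2 \le q \le 4$ is precisely the range where $\widehat{\|\cdot\|^q/q} + \widehat{(-\|\cdot\|^p/p)} \ge 0$ (indeed $> 0$) on mean-zero functions. I expect the Gamma-function sign bookkeeping, together with a density/truncation argument to make the formal Fourier computation legitimate for all $u_0, u_1 \in \mathcal{A}$, to be where essentially all the work lies.
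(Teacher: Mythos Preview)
Your overall architecture (Fourier transform of the kernel, Gamma-sign analysis, convexity $\Rightarrow$ uniqueness) matches the paper's, but there is a genuine gap in the key step.

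You propose to show $Q(v)>0$ for every nonzero $v$ with $\int v=0$. This is \emph{false}, and your own paragraph on translates essentially proves it: if $u$ is a minimizer and $u_a(\cdot)=u(\cdot-a)$, then $h=u_a-u$ is mean-zero and nonzero, while $t\mapsto E((1-t)u+tu_a)$ is a quadratic with equal endpoint values lying \emph{above} the minimum on $(0,1)$; this forces $Q(h)\le 0$. More concretely, at $q=2$ one computes directly that $\int\!\!\int \|x-y\|^2 h(x)h(y)\,dx\,dy=-2\bigl|\int x\,h(x)\,dx\bigr|^2\le 0$ whenever $\int h=0$, so the $q$-part of $Q$ is \emph{nonpositive} on mean-zero functions, not nonnegative. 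The Fourier picture explains why mean-zero is not enough: for $2<q<4$ the distribution $\widehat{\|x\|^q}=C(q)\|\xi\|^{-q-n}$ has $C(q)>0$, but $\|\xi\|^{-q-n}$ is so singular at $0$ that the finite-part pairing with $|\hat v|^2$ involves subtracting the second-order Taylor coefficients of $|\hat v|^2$ at the origin, and these do not vanish under $\hat v(0)=0$ alone.

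The missing idea, which the paper supplies, is to use the translation freedom \emph{before} taking the difference: given two minimizers, translate each so that $\int x_i u=\int x_i v=0$. Then $h=v-u$ lies in
\[
X_0=\Bigl\{h:\ \int h=0,\ \int x_i h=0,\ i=1,\dots,n\Bigr\},
\]
so $\hat h(0)=0$ and $\nabla\hat h(0)=0$, hence $|\hat h(\xi)|^2=O(\|\xi\|^4)$. Now $\|\xi\|^{-q-n+4}\in L^1_{\mathrm{loc}}$ for $q<4$, the Fourier integral is an honest nonnegative integral, and one gets $F(h)\ge 0$ on $X_0$ (with the endpoint cases $q=2,4$ handled by direct algebraic expansion). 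Combined with the strict positive-definiteness of the Riesz part $-\frac{1}{p}\int\!\!\int\|x-y\|^p h\,h>0$ for $h\not\equiv 0$, this yields $\phi''>0$ and the contradiction. The translate obstruction disappears because centering $u$ and $u_a$ makes them coincide. So the fix is small but essential: impose vanishing first moments as well as vanishing mass on the difference, and build that into the argument by centering the two minimizers first.
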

 The uniqueness will be a consequence of the fact that  $E(u)$ is convex  on the admissible set. The convexity of the second functional that appears in the definition of $E(u)$ is very well
 known. The proof of the convexity of the first  is the main contribution of
 this paper (theorem 2.4).

If the $L^{\infty}$ condition is removed from the definition of ${\mathcal A}$, in \cite{choksi1}
the existence of minimizer for
$$E(\mu)=\int_{R^n\times R^n} K(x-y) d\mu (x)d\mu (y) $$
is proved in the set of the probability measures $\mu.$
It would be interesting to know if  our technique can be extend to prove the uniqueness of
the minimizing measure.

In section 3 and for $n=3$,  we give examples of minimizers for the powers $p=-1,q=2,$ $p=-1,q=3$ and $p=-1,q=4.$ Perhaps the most interesting case is $p=-1,q=4$ for which we construct (with computer assistance) radially symmetric minimizers $u(r)$  such that both sets $\{r: 0<u(r)<M\}$ and $\{r: u(r)=M\}$ have positive measure.

 \section {Proof of the main result}

The proof will be broken is several lemmas. The proof of the first is elementary.
\begin{lemma}  If, as before,  $\|x\|$ denotes the euclidean norm in $R^n$ and $q\geq 1$ we have
\begin{equation}
\|x-y\|^q \geq \left(\frac{1}{2}\right)^{(q-1)}\|x\|^q-\|y\|^q
\end{equation}

\begin{equation}
\|x-y\|^q \leq 2^{q-1}(\|x\|^q+\|y\|^q)
\end{equation}

\begin{equation}
\|x+y\|^q \leq 2^{q-1}(\|x\|^q+\|y\|^q)
\end{equation}

\end{lemma}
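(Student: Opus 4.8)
The plan is to obtain all three inequalities from two completely elementary ingredients: the triangle inequality for the Euclidean norm, and the convexity of the function $t\mapsto t^{q}$ on $[0,\infty)$ (valid precisely because $q\geq 1$). The convexity, applied at the midpoint of two numbers $a,b\geq 0$, gives $\big(\tfrac{a+b}{2}\big)^{q}\leq\tfrac{a^{q}+b^{q}}{2}$, that is,
\[
(a+b)^{q}\leq 2^{\,q-1}\bigl(a^{q}+b^{q}\bigr),
\]
which will serve as the workhorse for all three estimates (for $q=1$ this is an equality).

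For the two upper bounds I would argue as follows. By the triangle inequality $\|x\pm y\|\leq\|x\|+\|y\|$, and since $t\mapsto t^{q}$ is nondecreasing on $[0,\infty)$, raising to the power $q$ preserves the inequality; applying the displayed convexity estimate with $a=\|x\|$ and $b=\|y\|$ then yields $\|x\pm y\|^{q}\leq 2^{\,q-1}\bigl(\|x\|^{q}+\|y\|^{q}\bigr)$, which is exactly the second and the third inequality.

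For the lower bound I would use the decomposition $x=(x-y)+y$, which gives $\|x\|\leq\|x-y\|+\|y\|$ by the triangle inequality. Raising to the power $q$ and invoking the convexity estimate with $a=\|x-y\|$ and $b=\|y\|$ produces $\|x\|^{q}\leq 2^{\,q-1}\|x-y\|^{q}+2^{\,q-1}\|y\|^{q}$; rearranging and dividing by $2^{\,q-1}$ gives $\|x-y\|^{q}\geq\big(\tfrac12\big)^{q-1}\|x\|^{q}-\|y\|^{q}$, which is the first inequality.

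There is essentially no obstacle here; the only step that deserves an explicit word is the justification of $(a+b)^{q}\leq 2^{\,q-1}(a^{q}+b^{q})$, which is immediate from Jensen's inequality for the convex map $t\mapsto t^{q}$ (equivalently, from the power-mean inequality). Everything else is a one-line manipulation combining this with the triangle inequality and the monotonicity of $t\mapsto t^{q}$.
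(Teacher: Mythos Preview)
Your argument is correct: the convexity inequality $(a+b)^{q}\leq 2^{q-1}(a^{q}+b^{q})$ combined with the triangle inequality gives all three bounds exactly as you describe. The paper does not actually write out a proof of this lemma---it simply declares it elementary---so your approach is precisely the sort of argument the author has in mind.
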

\begin{remark}  Instead of (2.1), all we need is $\|x-y\|^q \geq c_1\|x\|^q-c_2\|y\|^q$ and this can be achieved taking $0<c_1<1$ and $c_2$ convenient.
\end{remark}

\begin{lemma}  If $u\in {\mathcal A}$ then
$$ \int_{R^n\times R^n} \|x-y\|^q u(x)u(y) \,dx \,dy$$ is finite if and only if
$$\int_{R^n} \|x\|^q u(x) \, dx$$
is finite. In that case, the integrals $$  \int_{R^n} \|x\|^r u(x) \, dx$$ are also finite for $1\leq r \leq q.$

\end{lemma}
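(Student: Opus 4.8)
The plan is to prove the "if and only if" by using the two elementary inequalities (2.1) and (2.2) from Lemma 2.1 to sandwich the double integral between expressions involving the single integral, and then to handle the lower powers $1 \le r \le q$ by a standard interpolation/splitting argument using the finiteness of $m = \int u$.

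First I would establish the forward direction. Suppose $\int_{R^n} \|x\|^q u(x)\,dx < \infty$. Applying (2.2), namely $\|x-y\|^q \le 2^{q-1}(\|x\|^q + \|y\|^q)$, and multiplying by $u(x)u(y) \ge 0$ and integrating, one gets
\[
\int_{R^n\times R^n} \|x-y\|^q u(x)u(y)\,dx\,dy \le 2^{q-1}\Bigl( m \int_{R^n}\|x\|^q u(x)\,dx + m\int_{R^n}\|y\|^q u(y)\,dy\Bigr) = 2^q\, m \int_{R^n}\|x\|^q u(x)\,dx < \infty,
\]
where I used $\int u = m$ to collect one of the two factors. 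This is finite, so one direction is done. (Here I should note $u \in L^1$ is what makes $m$ finite; this is built into $\mathcal{A}$.)

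For the converse, suppose the double integral is finite. Fix a point, say choose any $y_0$ with $u(y_0)$ "not too small" — more precisely, since $u \ge 0$, $u \not\equiv 0$ (because $\int u = m > 0$), there is a set $B$ of positive measure and a constant $\delta>0$ with $u \ge \delta$ on $B$; one may even take $B$ bounded, so $\int_B \|y\|^q u(y)\,dy < \infty$ and $\int_B u(y)\,dy =: c_0 > 0$. Now apply (2.1) in the form $\|x-y\|^q \ge (1/2)^{q-1}\|x\|^q - \|y\|^q$ (valid since $q \ge 1$; and for $q \ge 1$ this is what the lemma gives, while the remark lets us be flexible about constants). Multiply by $u(x)u(y)\ge 0$, integrate $x$ over $R^n$ and $y$ over $B$:
\[
\int_{R^n\times R^n}\|x-y\|^q u(x)u(y)\,dx\,dy \ge \int_{B}\!\int_{R^n} \Bigl[(1/2)^{q-1}\|x\|^q - \|y\|^q\Bigr] u(x)u(y)\,dx\,dy = (1/2)^{q-1} c_0 \int_{R^n}\|x\|^q u(x)\,dx - m\int_B \|y\|^q u(y)\,dy.
\]
Since the left side is finite and the last term on the right is finite, we conclude $\int_{R^n}\|x\|^q u(x)\,dx < \infty$. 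This is the one genuinely delicate point — one needs the elementary observation that a nonnegative $L^1$ function with positive integral is bounded below on a bounded set of positive measure, so that the "test mass" $c_0$ and the bounded moment $\int_B \|y\|^q u$ are available; everything else is just plugging in (2.1)–(2.2).

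Finally, for the lower powers $1 \le r \le q$: given $\int \|x\|^q u\,dx < \infty$, split $R^n = \{\|x\| \le 1\} \cup \{\|x\| > 1\}$. On $\{\|x\| \le 1\}$ we have $\|x\|^r \le 1$, so $\int_{\|x\|\le 1}\|x\|^r u\,dx \le m < \infty$. On $\{\|x\| > 1\}$ we have $\|x\|^r \le \|x\|^q$, so $\int_{\|x\|>1}\|x\|^r u\,dx \le \int \|x\|^q u\,dx < \infty$. Adding gives finiteness of $\int \|x\|^r u\,dx$, completing the proof.
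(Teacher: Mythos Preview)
Your proof is correct and follows essentially the same approach as the paper: inequality (2.2) for the upper bound, inequality (2.1) applied after restricting the $y$-integration to a bounded set carrying positive mass for the lower bound, and the same split at $\|x\|=1$ for the lower moments. The only cosmetic difference is that the paper takes $B=\{\|y\|\le R\}$ with $R$ chosen so that $\int_B u = m/2$ and then invokes $u\le M$ to bound $\int_B\|y\|^q u(y)\,dy$, whereas you take any bounded $B$ with $\int_B u>0$ and bound that integral by $(\sup_B\|y\|)^q\int_B u$; your detour through a set where $u\ge\delta$ is harmless but unnecessary, since all you use afterward is $c_0>0$ and $B$ bounded.
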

\begin{proof}

If $R$ is such that
$$\int_{\|y\|\leq R} u(y)\,dy= m/2,$$
using (2.1) we get $$ \int_{R^n\times R^n} \|x-y\|^q u(x)u(y) \,dx \,dy \geq \int_{ \|y\|\leq R} (\int_{R^n}
  \|x-y\|^q u(x)u(y) \,dx) \,dy\geq $$
$$\left(\frac{1}{2}\right)^{(q-1)} \int_{ \|y\|\leq R} (\int_{R^n}  \|x\|^q u(x)u(y) \,dx) \,dy- \int_{ \|y\|\leq R} (\int_{R^n}  \|y\|^q u(x)u(y) \,dx) \,dy$$
$$ \geq \frac{m}{2} \left(\frac{1}{2}\right)^{(q-1)} \int_{R^n}  \|x\|^q u(x) \,dx-m M \int_{ \|y\|\leq R}   \|y\|^q  \,dy.$$

We  have used  $u(y)\leq M.$ Therefore,$\displaystyle \int_{R^n}  \|x\|^q u(x) \,dx $ is finite and then
$$\int_{R^n} \|x\|^r u(x) \,dx $$ is also finite for $1\leq r \leq q$   because

$$\int_{R^n} \|x\|^r u(x) \, dx=\int_{\|x\|\leq 1} + \int_{\|x\|> 1} \leq m +\int_{\|x\|> 1}\|x\|^q u(x) \,dx .$$

Conversely, if $$\int_{R^n} \|x\|^q u(x) \, dx$$
is finite, using (2.3) we see that  the integral
$$ \int_{R^n\times R^n} \|x-y\|^q u(x)u(y) \,dx \,dy \leq 2^{q-1} \int_{R^n\times R^n} (\|x\|^q+\|y\|^q) u(x)u(y) \,dx \,dy$$
$$= 2^{q-1} 2m \int_{R^n} \|x\|^q u(x) \, dx$$
is also finite and the lemma is proved.
\end{proof}

In view of the previous lemma, we  define the Banach space
\begin{equation}X=\{h\in L_1(R^n) : \int_{R^n} ( \|x\|^q+1) |h(x)| \, dx < \infty
\end{equation}
and we redefine
\begin{equation}
{\mathcal A}=\{u\in X : u\geq 0, \quad \|u\|_{\infty}\leq M \quad \rm{and} \quad \int_{R^n} u(x) \, dx=m\}
\end{equation}

We also define
\begin{equation}
X_0=\{h\in X :  \int_{R^n} h(x) \,dx=0;
\int_{R^n} x_ih(x) \,dx=0, \quad 1\leq i \leq n\}
\end{equation}
and   we  consider the quadratic form $F: X \rightarrow R$
\begin{equation}
F(h)= \int_{R^n\times R^n} \|x-y\|^q h(x)h(y) \,dx \,dy.
\end{equation}
Clearly $F(h)$ is continuous. Our main result is the following:
\begin{theorem} For $h\in X_0 $   and $2\leq q \leq 4$, we have
 \begin{equation}
F(h)\geq  0.
 \end{equation}
\end{theorem}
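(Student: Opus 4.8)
The natural approach is via the Fourier transform: the quadratic form $F(h) = \int\!\!\int \|x-y\|^q h(x)h(y)\,dx\,dy$ is, up to a constant, the pairing of $\widehat h$ against the (distributional) Fourier transform of $\|x\|^q$. For $q$ not an even integer, $\widehat{\|x\|^q}$ is a negative constant times $\|\xi\|^{-n-q}$, which is a positive-definite kernel, so $F(h) < 0$ for all nonzero $h$ with $\widehat h$ not vanishing — this is the wrong sign and shows why we genuinely need the constraints defining $X_0$. The point of the two vanishing moment conditions ($\int h = 0$ and $\int x_i h = 0$) is that they force $\widehat h(\xi) = O(\|\xi\|^2)$ near $\xi = 0$, which is exactly what is needed to compensate the singularity of a kernel behaving like $\|\xi\|^{-n-q}$ with $q < 4$ — the integral $\int \|\xi\|^{-n-q}\|\widehat h(\xi)\|^2\,d\xi$ then converges at the origin. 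But the sign is still negative, so a pure Fourier argument on the symbol $\|x\|^q$ alone cannot give $F(h)\ge 0$; something about $q\ge 2$ must be used more cleverly.

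The route I would actually pursue is to rewrite $\|x-y\|^q$ using an integral representation that converts it into a superposition of manifestly nonnegative-definite kernels, after subtracting off the low-order polynomial part that is annihilated by $X_0$. Concretely, for $2 \le q \le 4$ one can write, for a suitable constant $c_{n,q}>0$,
\[
\|z\|^q = \text{(polynomial of degree} \le 2 \text{ in } z) + c_{n,q}\int_0^\infty t^{-1-q/2}\bigl(\text{something built from } e^{-t\|z\|^2}\bigr)\,dt,
\]
where the "something" is $e^{-t\|z\|^2}$ with its degree-$\le 2$ Taylor polynomial in $z$ removed (i.e. $e^{-t\|z\|^2} - 1 + t\|z\|^2$). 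Since $h \in X_0$ kills the quadratic polynomial contribution entirely, testing against the integral remainder gives
\[
F(h) = c_{n,q}\int_0^\infty t^{-1-q/2}\Bigl(\int\!\!\int \bigl(e^{-t\|x-y\|^2} - 1 + t\|x-y\|^2\bigr)h(x)h(y)\,dx\,dy\Bigr)dt.
\]
The Gaussian $e^{-t\|x-y\|^2}$ is positive-definite; the constants $1$ and $t\|x-y\|^2$ are low-degree and, tested against $h\in X_0$, one checks $\int\!\!\int h(x)h(y)\,dx\,dy = 0$ and $\int\!\!\int \|x-y\|^2 h(x)h(y)\,dx\,dy = -2\sum_i(\int x_i h)^2 + \ldots = 0$. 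Hence the inner integral equals $\int\!\!\int e^{-t\|x-y\|^2}h(x)h(y)\,dx\,dy = (t/\pi)^{n/2}\int e^{-\|\xi\|^2/(4t)}|\widehat h(\xi)|^2\,d\xi \ge 0$ (up to normalization of $\widehat{\cdot}$), so each slice is nonnegative and $F(h)\ge 0$ follows.

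The key constraint is $q \le 4$: the factor $t^{-1-q/2}$ must be integrable against the Gaussian remainder both at $t=0$ and $t=\infty$. At $t\to 0$ the remainder $e^{-t\|z\|^2}-1+t\|z\|^2$ is $O(t^2\|z\|^4)$, so $t^{-1-q/2}\cdot t^2 = t^{1-q/2}$ is integrable near $0$ precisely when $q < 4$ — and the endpoint $q=4$ works because $\|x-y\|^4$ is then itself a polynomial that one handles by a limiting/direct argument, or equivalently the form is $(\int\!\!\int \|x-y\|^4 hh) = 8(\int\|x\|^2 h)(\int h) - 8\|\int x\, h\|^2 + \ldots$, which vanishes on $X_0$ up to a manifestly nonnegative square. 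At $t\to\infty$, the $-1+t\|z\|^2$ terms grow but are killed by the $X_0$ constraints before integrating in $t$, so one should be slightly careful about the order of integration — I would first establish the identity for $h$ with compact support and rapidly decaying $\widehat h$, justify Fubini there, and then extend to all of $X_0$ by the density/continuity already noted (Lemma 2.3 guarantees the relevant moments are finite). The main obstacle, and the place requiring real care, is precisely this interchange of the $t$-integral with the $x,y$-integrals together with pinning down the correct subtracted polynomial and the constant $c_{n,q}$ so that the representation is valid for the full range $2\le q\le 4$.
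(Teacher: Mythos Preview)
Your opening paragraph contains a sign error that is worth correcting, because it dismisses exactly the argument the paper uses. For $2<q<4$ the tempered-distribution Fourier transform of $\|x\|^q$ is $C(q)\|\xi\|^{-n-q}$ with
\[
C(q)=2^{\,q+n/2}\,\frac{\Gamma((q+n)/2)}{\Gamma(-q/2)},
\]
and since $-q/2\in(-2,-1)$ one has $\Gamma(-q/2)>0$, hence $C(q)>0$, not $<0$. The paper's proof is precisely the ``pure Fourier argument'' you rejected: write $F(h)=C(q)\int\|\xi\|^{-q-n}|\widehat h(\xi)|^2\,d\xi$, observe that the $X_0$ conditions force $\widehat h(0)=0$ and $\nabla\widehat h(0)=0$ so that $|\widehat h(\xi)|^2/\|\xi\|^4$ is bounded near the origin and the integral is a genuine (nonnegative) Lebesgue integral for $q<4$; the endpoints $q=2,4$ are handled by direct polynomial expansion, and the passage from $h\in X_0\cap\mathcal S$ to general $h\in X_0$ is by approximation.

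That said, your subordination approach is correct and is a genuinely different packaging. The identity you want is
\[
\|z\|^q=\frac{1}{\Gamma(-q/2)}\int_0^\infty t^{-1-q/2}\bigl(e^{-t\|z\|^2}-1+t\|z\|^2\bigr)\,dt,\qquad 2<q<4,
\]
and the constant $1/\Gamma(-q/2)$ is positive for exactly the same reason $C(q)$ is. After testing against $h\otimes h$ with $h\in X_0$, the subtracted polynomial terms drop out and each $t$-slice is $\int\!\!\int e^{-t\|x-y\|^2}h(x)h(y)\,dx\,dy\ge 0$, so $F(h)\ge 0$. Your approach avoids the language of homogeneous distributions and analytic continuation, trading it for a Fubini justification; note that the bound $0\le e^{-u}-1+u\le u^2/2$ forces you to control $\int\|x\|^4|h|$, which $X_0$ alone does not provide, so (as you already flag) an approximation by compactly supported $h$ is needed here as well---the same density step the paper carries out. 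The two proofs are really two faces of the same positivity: the paper reads it off the sign of $C(q)$ on the Fourier side, you read it off the positive-definiteness of the Gaussian in physical space.
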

 \begin{proof}
 If $q=2$, following \cite{choksi1}, we have

 $$F(h)= \int_{R^n\times R^n} \|x-y\|^2 h(x)h(y) \,dx \,dy$$
 $$=\int_{R^n\times R^n}( \|x\|^2-2\langle x,y\rangle+\|y\|^2) h(x)h(y) \,dx \,dy=0.$$

 If  $q=4$ we have
$$F(h)= \int_{R^n\times R^n} \|x-y\|^4 h(x)h(y) \,dx \,dy=$$
  $$\int_{R^n\times R^n} (\langle x,x\rangle -2 \langle x,y\rangle+\langle y,y\rangle)^2  h(x)h(y) \,dx \,dy.$$
Expanding the square  and using the definition of (2.6) of the space  $X_0$  we get
\begin{equation}
F(h)=2\left (\int_{R^n}\langle x,x\rangle h(x) \,dx \right)^2 +4\int_{R^n} (\langle x,y\rangle)^2 h(x)h(y) \,dx \,dy.
\end{equation}

Moreover $\langle x,y\rangle^2$ is a sum with positive coefficients of $x_i^2y_i^2$ and of $x_ix_jy_iy_j$ with $i\neq j.$ Therefore the second term of $F(h)$ in (2.9) is the sum of
$$ \left (\int_{R^n} x_i^2 h(x) \,dx \right)^2 \quad \rm{and} \quad
 \left(\int_{R^n} x_ix_j h(x) \,dx \right)^2.$$
and then  $F(h) \geq 0$ for $q=4.$

For  $2<q<4$  we start with  $h \in X_0 \cap  {\mathcal{S}}(R^n)$, where ${\mathcal{S}}(R^n)$
is  the Schwartz space. If $\hat{h}(\xi)$ denotes the Fourier transform of $h(x)$,
from the definition (2.6)  of the space $X_0$ we have
\begin{equation}
\hat{h}(0)=0; \qquad \frac{ \partial \hat{h}(0)}{\partial \xi_i}=0;\ \ i=1, \cdots,n.
\end{equation}
Next we notice  that $\hat{h}(\xi)$ is a $C^2$ function with bounded second derivatives
because the integral
$$ \int_{R^n} ( 1+\|x\|^2) |h(x)| \,dx$$
is finite.

By Parseval and convolution we also have
\begin{equation}
F(h)= \int_{R^n}\widehat{ \|x\|^q}(\xi) |\hat{h}(\xi)|^2 \, d \xi.
\end{equation}
But
$$\widehat{ \|x\|^q}(\xi)=C(q)\|\xi\|^{-q-n}$$
where
$$C(q)=2^{q+n/2}\frac{ \Gamma((q+n)/2)}{\Gamma (-q/2)}.$$
For a proof see \cite{gelfand}, chapter II, section 3. Therefore, $C(q) >0$ for $2<q<4$ because $\Gamma(z)$ is positive for either  $z>0$ or $-2<z<-1.$ Notice that $C(q)$ has a singularity at $q=2$ and $q=4.$ That is why those cases have been treated separately.
Since $\|\xi\|^{-q-n}$ is not in $L^1_{loc}$ at $\xi=0$,  the right hand side of (2.11)
\begin{equation} F(h)= C(q) \int_{R^n}\|\xi\|^{-q-n} |\hat{h}(\xi)|^2   \, d \xi.
\end{equation}
has to be understood  in the sense of analytic continuation of a tempered distribution.
 However, in view of (2.10), we see that the derivatives of
 $|\hat{h}(\xi)|^2$ vanish at $\xi=0$ up to order three. In fact, the right hand side of (2.11) can be written as
 \begin{equation}
 F(h)= C(q) \int_{R^n}\|\xi\|^{-q-n} \|\xi\|^4 \left|\frac{\hat{h}(\xi)}{\|\xi\|^2} \right |^2 \, d \xi.
 \end{equation}

Now we see that   $\|\xi\|^{-q-n+4}$ does belong to $L^1_{loc}$ for $q<4$ and
$\displaystyle{\left|\frac{\hat{h}(\xi)}{\|\xi\|^2} \right |^2}$ is bounded  at $\xi=0$  in view of (2.10) and the fact that $\hat{h}(\xi)$ is  $C^2$.  We conclude that $F(h) \geq 0$
  for $h \in X_0 \cap  {\mathcal{S}}(R^n).$

 Since (2.13) makes sense for $h$ in the space $X_0$ defined by (2.6) , we expect it to hold in this larger set. Taking convenient approximations, we next  sketch a proof for  that statement.

First we assume that $h(x)=0$ for $\|x\| >R$ and we denote by $\rho:R^n \rightarrow R$  a nonnegative  $C^{\infty}$ function that vanishes for $\|x\|\geq 1$ and has integral equal to one in $R^n$. As usual,

$$\rho_{\epsilon}(x)=\epsilon ^{-n} \rho(x/\epsilon).$$

For $0<\epsilon \leq 1$  defining
$$h_{\epsilon}(x)=(\rho_{\epsilon}*h)(x)=\int_{R^n} \rho_{\epsilon}(x-y)h(y) \, dy,$$
 we have
\begin{equation}
 \int_{R^n} h_{\epsilon}(x) \,dx=0
 \end{equation}
 and
\begin{equation}
\int_{R^n} x_ih_{\epsilon} (x) \,dx=\int_{R^n\times R^n}(y_i+z_i) \rho_{\epsilon}(z)h(y) \, dy \,
dz=0.
\end{equation}
Another way to see that (2.14) and (2.15) hold is to look at
$ \hat{h}_{\epsilon}(\xi) =\hat{\rho}_ {\epsilon}(\xi) \hat{h}(\xi).$
Therefore defining
$$F_1(\hat{h})=C(q) \int_{R^n}\|\xi\|^{-q-n} |\hat{h}(\xi)|^2   \, d \xi,$$
since $h\in X_0 \cap {\mathcal{S}}(R^n)$, (actually $h\in X_0 \cap {\mathcal{D}}(R^n)$) in view of (2.14) and (2.15), we  have just proved that $F(h_{\epsilon})= F_1(\hat{h}_{\epsilon}).$
As $\epsilon$ tends to zero, we show that $F(h_{\epsilon})$ tends to $F(h)$ and $F_1(\hat{h_{\epsilon}})$ tends to $F_1(\hat{h}).$

Taking in account that
$$\int_{R^n}\|x\|^q |h_{\epsilon}(x)-h(x)| \,dx\leq    (R+1)^q \int_{\|x\|\leq R+1} |h_{\epsilon}(x)-h(x)| \,dx, $$
 and using  that $h_{\epsilon}$ tends to $h$ in $L^1(R^n)$, we conclude that
 $h_{\epsilon}$ tends to $h$ in the space $X$  and this shows $F(h_{\epsilon})$ converges to $F(h).$

To analyze the convergence of $ F_1(\hat{h_{\epsilon}})$ we write:

$$F_1(\hat{h_{\epsilon}})-F_1(\hat{h})= \int_{\|\xi\| \leq a}\|\xi\|^{-q-n+4} \left(\left|\frac{\hat{h}_{\epsilon}(\xi)}{\|\xi\|^2} \right |^2-\left|\frac{\hat{h}(\xi)}{\|\xi\|^2} \right |^2\right ) \, d \xi+$$
$$+ \int_{\|\xi\| \geq a}\|\xi\|^{-q-n} (|\hat{h_{\epsilon}}(\xi)|^2-|\hat{h}(\xi)|^2)  \, d \xi.$$

Using that  the second derivatives of $\hat{h_{\epsilon}}(\xi)$ are uniformly bounded,
given $\delta>0$ we first  choose $ a(\delta)$ in such way that
 the first integral is less than $  \delta/2.$ For that choice of $a$, the second integral can be made less than $\delta/2$ because $\hat{h_{\epsilon}}(\xi)$ converges to $\hat{h}(\xi)$
uniformly in $R^n$ and the integral

$$\int_{\|\xi\| \geq a}\|\xi\|^{-q-n} \, d\xi$$ is finite. This takes care of the case $h$ has compact support.

Next we take $ h\in X_0$ and we define the following functions:

$$g_0(x)=n/\omega_n   \quad \rm{for} \quad \|x\|\leq 1; \quad g_0(x)=0   \quad \rm{for} \quad\|x\|> 1.$$
$$g_i(x)=\frac{n(n+2)x_i}{\omega_n} \quad \rm{for} \quad \|x\|\leq 1; \quad g_i(x)=0 \quad \rm{for} \quad \|x\|> 1$$

Then
$$ \int_{R^n} g_0(x) \, dx =1; \quad \int_{R^n}x_i g_0(x) \, dx=0;  \quad i=1, \cdots,n $$
$$ \int_{R^n} g_i(x) \, dx =0; \quad \int_{R^n}x_j g_i(x) \, dx= \delta_{ij};  \quad   i=1, \cdots,n $$
If we define
$$ \phi_m(x)= \left( \int_{ \|x\| \leq m} h(x) \,dx \right)g_0(x)+ \sum_{i=1}^{n}\left (\int_{ \|x\| \leq m} x_ih(x) \,dx \right)g_i(x)$$
then the function
$$h_m(x)=h(x)-\phi_m(x) \quad \rm{for} \quad \|x\| \leq m; \quad  h_m(x)=0 \quad \|x\| \leq m $$ satisfies
$$ \int_{R^n} h_m(x) \, dx=0; \quad \int_{R^n} x_i h_m(x) \, dx=0; \quad i=1, \cdots,n $$
and then $F(h_m)=F_1( \hat{h_m}(\xi)).$ Moreover
$$ \int_{R^n} (\|x\|^q+1)|h_m(x)-h(x)| \, dx$$
goes to zero as $m$ tends to infinity and then arguing as before we conclude

$F(h)=F_1( \hat{h}(\xi))$  and the theorem is proved.
\end{proof}

For a proof of next lemma se \cite{lieb1}.
\begin{lemma} For $-n<p<0$ and  $h\in  L^1(R^n) \cap L^{\infty}(R^n)$ the quadratic form
$$G(h)=\int_{R^n\times R^n} \|x-y\|^p h(x)h(y) \,dx \,dy$$
is well defined and $G(h)>0$ if $h\not\equiv 0.$
 \end{lemma}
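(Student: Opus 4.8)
The plan is to exploit that, for $-n<p<0$, the kernel $\|x\|^{p}$ is a strictly positive definite function, which on the Fourier side is the statement
$$\widehat{\|x\|^{p}}(\xi)=C(p)\,\|\xi\|^{-p-n},\qquad C(p)=2^{\,p+n/2}\frac{\Gamma((p+n)/2)}{\Gamma(-p/2)}>0,$$
the positivity of $C(p)$ coming from $(p+n)/2>0$ and $-p/2>0$ (see \cite{gelfand}, Chapter~II, Section~3). Note that here $0<p+n<n$, so $\|\xi\|^{-p-n}\in L^{1}_{\mathrm{loc}}$ and no analytic continuation is needed, in contrast with Theorem~2.4. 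Formally, Parseval and convolution give $G(h)=C(p)\int_{R^{n}}\|\xi\|^{-p-n}\,|\hat h(\xi)|^{2}\,d\xi\ge 0$, with equality only if $\hat h\equiv 0$, i.e.\ $h\equiv 0$. Two things then have to be checked: (a) that the defining double integral converges absolutely for $h\in L^{1}(R^{n})\cap L^{\infty}(R^{n})$, and (b) that the Fourier representation is valid at this low regularity, where $\hat h$ is merely bounded and continuous.

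For (a), I would split the inner integral at the diagonal: on $\{\|x-y\|\le 1\}$ use $|h(x)h(y)|\le\|h\|_{\infty}|h(y)|$ together with $\int_{\|x-y\|\le 1}\|x-y\|^{p}\,dx<\infty$, valid since $p>-n$; and on $\{\|x-y\|>1\}$ use $\|x-y\|^{p}\le 1$ together with $|h(x)h(y)|\in L^{1}(R^{n}\times R^{n})$, valid since $h\in L^{1}$. Integrating in $y$ gives absolute convergence, with the same bound controlling $\int\!\!\int\|x-y\|^{p}\,|h(x)|\,|h(y)|\,dx\,dy$.

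For (b) — the genuinely delicate point — I would bypass distributional Parseval by means of the Riesz composition formula: setting $\beta:=(n-p)/2\in(n/2,n)$, one has the pointwise identity of nonnegative functions
$$\|x-y\|^{p}=C_{n,p}\int_{R^{n}}\|x-z\|^{-\beta}\,\|z-y\|^{-\beta}\,dz,\qquad C_{n,p}>0,$$
where convergence of the $z$-integral uses $\beta<n$ at the two poles and $2\beta=n-p>n$ at infinity. Substituting this into $G(h)$ and interchanging the order of integration — legitimate by Tonelli, since integrating $z$ out of the absolute-value integrand reproduces exactly the finite quantity from (a) — yields
$$G(h)=C_{n,p}\int_{R^{n}}\Bigl(\int_{R^{n}}\|x-z\|^{-\beta}h(x)\,dx\Bigr)^{2}dz=C_{n,p}\,\|v\|_{L^{2}(R^{n})}^{2}\ \ge\ 0,$$
where $v(z)=\int_{R^{n}}\|x-z\|^{-\beta}h(x)\,dx$ is finite for every $z$ (the kernel is integrable near $z$ against $h\in L^{\infty}$ and bounded away from $z$ against $h\in L^{1}$; in fact $\|v\|_{\infty}\le C\|h\|_{\infty}+\|h\|_{1}$), and lies in $L^{2}$ because the displayed triple integral is absolutely convergent. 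Finally, if $G(h)=0$ then $v\equiv 0$; since $h\mapsto v$ is a nonzero constant times the Riesz potential of order $(n+p)/2\in(0,n/2)$, hence injective (equivalently $\hat v=c\,\|\xi\|^{\beta-n}\hat h$ with $\|\xi\|^{\beta-n}>0$ for $\xi\neq 0$), this forces $h\equiv 0$, so $G(h)>0$ whenever $h\not\equiv 0$. The main obstacle is precisely step (b): one must resist manipulating $\|\xi\|^{-p-n}|\hat h|^{2}$ directly with $\hat h\notin L^{2}$, and instead route everything through the pointwise convolution-square representation, which makes both the Fubini step and the $L^{2}$ identity transparent. Full details are in \cite{lieb1}.
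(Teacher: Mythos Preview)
Your argument is correct: the absolute convergence in (a) is handled cleanly, and the Riesz composition identity in (b) with $\beta=(n-p)/2\in(n/2,n)$ legitimately converts $G(h)$ into $C_{n,p}\|v\|_{L^2}^2$, after which injectivity of the Riesz potential gives strict positivity. The paper itself offers no proof of this lemma beyond the citation to \cite{lieb1}; the convolution-square representation you sketch is exactly the argument given there (Theorem~9.8), so your proposal is not so much an alternative as a faithful expansion of the reference the paper defers to.
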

\noindent {\bf Proof of the theorem 1.1} Suppose $u$ and $v$ are distinct minimizers. Following
\cite {choksi1}, we can make  a translation in the space variable
(possibly different translations for $u$ and $v$) in such way that
$$\int_{R^n} x_iu(x) \,dx=0=\int_{R^n} x_iv(x) \,dx=0;  \quad i=1, \cdots ,n.$$
We  keep the same notation for the translated functions.
Therefore, the function $h=v-u$ belongs to the space $X_0$ defined by (2.6).

 Defining  $\phi(t)= E((1-t)u+tv)$ we have $\phi(0)=\phi(1)$, $\phi'(0)\geq 0$ (because
  $(1-t)u+tv$ is admissible for $0\leq t \leq 1$)  and $$\phi''(t)=2F(h)+2G(h)>0$$ (in view of theorem 2.4 and lemma 2.5).
 This is a  contradiction and  uniqueness is proved.

 To prove the radial symmetry, suppose  $u(x)$ is a minimizer satisfying
 $$\int_{R^n} x_iu(x) \,dx =0, i=1, \cdots ,n$$ This condition can be written in the vector form
    $$\int_{R^n} xu(x) \,dx =0.$$
     If  $C$ is any orthogonal matrix and  $v(x)=u(Cx)$, then $v$ is also
     a minimizer and
     $$ \int_{R^n} xv(x)=C^{-1}\int_{R^n} xu(x) \,dx =0$$
 Therefore  we must have $u(x)=v(x)=u(Cx)$ and this implies the radial symmetry of
 the minimizer and the theorem is proved.

 Next theorem will be useful for the applications we are going to make.

\begin{theorem}   Let $ u\in {\mathcal A}$ be a radially symmetric function and let $\phi(t)$ defined
 in the proof of the theorem1.1. If $0<p<n$ and $2\leq q \leq 4$,
then $u$ minimizes $E$ if and only if
\begin{equation}
\phi'(0)=E'(u)(v-u)=
2\int_{R^n\times R^n}K(x-y) u(x)(v(y)-u(y) \,dx \,dy\geq 0
\end{equation}
 for any radially symmetric function $v \in {\mathcal A}.$ Or, equivalently,
 \begin{equation}
\int_{R^n\times R^n}K(x-y) u(x)v(y) \,dx \,dy\geq \int_{R^n\times R^n}K(x-y) u(x)u(y) \,dx \,dy
\end{equation}
 for any radially symmetric function $v \in {\mathcal A}.$
\end{theorem}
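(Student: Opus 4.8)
The plan is to reduce the stated equivalence to the convexity result already established. Note first that $\phi(t)=E((1-t)u+tv)$ with $h=v-u$ expands as a quadratic polynomial in $t$: writing $E(w)=F(w)+G(w)$ with the two quadratic forms bilinear, we get $\phi(t)=E(u)+t\,E'(u)(v-u)+t^2\bigl(F(h)+G(h)\bigr)$, so that $\phi'(0)=E'(u)(v-u)=2\int_{R^n\times R^n}K(x-y)u(x)(v(y)-u(y))\,dx\,dy$ and $\phi''(t)=2\bigl(F(h)+G(h)\bigr)$, which is constant in $t$. The first thing I would do is record this expansion explicitly, since both the formula in (2.17) and the passage to (2.18) are just rewritings of it; the equivalence of (2.17) and (2.18) is immediate once one cancels the common factor $2$ and the term $\int K(x-y)u(x)u(y)$ appearing on both sides.

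For the substantive direction — that (2.17) implies $u$ minimizes $E$ among radially symmetric competitors — I would argue by convexity of $\phi$ on $[0,1]$. The key point is that for a radially symmetric $v\in\mathcal A$ we still have $h=v-u\in X_0$: indeed $\int h=m-m=0$ and $\int x_i h(x)\,dx=0$ for each $i$ because both $u$ and $v$ are radially symmetric (each first moment vanishes by oddness of the integrand against an even function). Hence by Theorem 2.4, $F(h)\ge 0$, and by Lemma 2.5, $G(h)\ge 0$ (strictly if $h\not\equiv 0$); so $\phi''\ge 0$ and $\phi$ is convex. Then $\phi(1)\ge\phi(0)+\phi'(0)\ge\phi(0)$ whenever $\phi'(0)\ge 0$, i.e. $E(v)\ge E(u)$. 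Since every radially symmetric competitor is reached this way, $u$ is a minimizer over the radial class.

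The converse direction — if $u$ minimizes $E$ (over $\mathcal A$, hence a fortiori over the radial subclass) then (2.17) holds — is the standard first-order condition: for $0\le t\le 1$ the function $(1-t)u+tv$ lies in $\mathcal A$ (it is nonnegative, bounded by $M$ by convexity of the constraint, and has mass $m$), so $\phi(t)\ge\phi(0)$ for all $t\in[0,1]$, which forces $\phi'(0)\ge 0$; this is exactly (2.17). I do not expect any real obstacle here; the only points needing care are the verification that $h\in X_0$ (so that Theorem 2.4 applies — this uses radial symmetry of both $u$ and $v$, not merely of $u$) and the observation that $\phi$ is a genuine quadratic, so that the convexity inequality $\phi(1)\ge\phi(0)+\phi'(0)$ is exact rather than asymptotic. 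One should also note that the hypothesis $0<p<n$ here plays the same role as $-n<p<0$ did before up to the sign already absorbed into $G$, so that Lemma 2.5 is applicable as stated; if the intended reading is $-n<p<0$ the argument is unchanged.
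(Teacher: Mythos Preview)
Your argument is correct as far as it goes and spells out exactly the convexity computation underlying the paper's one-line proof (``follows immediately from Theorem 1.1''), but it stops one step short. In the backward direction you conclude only that $u$ minimizes $E$ over the radially symmetric elements of $\mathcal{A}$; you say so explicitly. The theorem, however, is meant to certify that $u$ is a \emph{global} minimizer over all of $\mathcal{A}$ --- this is how it is invoked throughout Section~3 to show that the explicit radial candidates solve problem $P$. Since the hypothesis (2.17) is tested only against radial $v$, your inequality $\phi(1)\ge\phi(0)+\phi'(0)$ gives no direct information about non-radial competitors.

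The missing link is precisely Theorem 1.1: a global minimizer exists and, after centering, is radially symmetric, so it belongs to the class you have handled; hence $E(u)\le E(u_{\min})$ and $u$ is a global minimizer. That single sentence is the entire content of the paper's proof, and once you append it your argument and the paper's coincide. (An alternative patch, avoiding existence of a minimizer, is to observe that $\Lambda(y)=\int K(x-y)u(x)\,dx$ is radial whenever $u$ is, so for any $v\in\mathcal{A}$ replacing $v$ by its spherical average $\tilde v\in\mathcal{A}$ leaves $\phi'(0)$ unchanged; then (2.17) for radial $v$ already forces $\phi'(0)\ge 0$ for all centered $v$, and your convexity step finishes directly.)
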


 \begin{proof} The proof follows immediately from theorem 1.1.
 \end{proof}
\section{ Examples}

As before, for given $M>0$ and $m>0$ and $X$ defined by (2.4), we consider the set of admissible functions
\begin{equation}
{\mathcal A}=\{u\in X : u\geq 0, \quad \|u\|_{\infty}\leq M \quad \rm{and} \quad \int_{R^n} u(x) \, dx=m.\}
\end{equation}
 For $-n<p<0$ and $ 0<q$ we define
$$K(x)=\frac{\|x\|^q}{q} - \frac{\|x\|^p}{p}$$
and
$$E(u)=\int_{R^n\times R^n} K(x-y) u(x)u(y) \,dx \,dy$$
We define problem $P$ by
\begin{equation}
(P): \quad \mbox{Minimize}\quad E(u)\quad  \mbox{for} \quad u \in {\mathcal A}.
\end{equation}

In  \cite{frank1}   problem $P$ with $p=-1$, $q>0$ and $n=3$   is considered   and  three phases of the minimizers  are defined:

Phase 1: $|\{x: u(x)=M\}| = 0$,

Phase 2: $0<|(\{x: u(x)=M\}| <m/M$,

Phase 3:  $|(\{x: u(x)=M\}|=m/M,$

where $| \cdot|$ denotes the Lebesgue measure in $R^3.$ There it is shown that the minimizer is of phase 1 if the ratio $m/M$ is below a certain critical value and it is of
phase 3 if the ratio $m/M$ is above  a certain  (perhaps different) critical value. In the case $p=-1, q=2$ the minimizers are known for all values of the ratio $m/M$ and, as a consequence, it is known that that phase 2 does not occur. Therefore, in that case, as the ratio $m/M$ increases, the phase jumps directly from phase 1 to phase 3.

Here in this paper we adopt the same terminology and we construct very explicitly the phase 3 minimizers  also in the cases $p=-1, q=3$ and $p=-1, q=4$ and we exhibit the critical ratio.

As far as  phase 1 minimizers are concerned, in the case $p=-1, q=4$ they are constructed very explicitly and  the critical ratio is calculated. In the case $p=-1, q=3$ the construction is  less explicit because it depends on numerical calculations.

Finally, in the case $p=-1, q=4$  we construct (with computer assistance)  phase 2 minimizers.  There is a strong indication the Phase 2 occurs also in the case $p=-1, q=3$ but the  calculations are heavier.

Although the construction of the minimizers is carried out for particular powers, it may give a good insight for more general cases.

Let us emphasize that the fact that  the functions that we are going to construct are indeed minimizers is a consequence of our uniqueness result.

We have performed the calculation and the plotting using REDUCE.

We start with some  necessary conditions for minimizers for the problem $P$  in the general case (see \cite{choksi1}). For $-n<p<0$ and $ 0<q$  and for a given function  $u:R^n \rightarrow R$, we define the function
\begin{equation} \Lambda (x)= \int_{R^n}K(x-y) u(y) \, dy
\end{equation}
where
$$K(x)=\frac{\|x\|^q}{q} - \frac{\|x\|^p}{p}$$

If $u\in \mathcal{A}$ is a minimizer of problem $P$ and the set $\{x: 0<u(x)<m\}$ has positive measure, then
there is a $\eta>0$ such that

\begin{equation} \Lambda (x)= \left\{\begin{array}{ll}
\eta & \mbox{if  $0<u(x) <M$} \\
\leq \eta & \mbox{if  $u(x)=M$} \\
\geq \eta & \mbox{if  $u(x)=0.$}
\end{array} \right.
\end{equation}
The second condition in (3.4) is not given in \cite{choksi1})  but it can be proved by the same method. For the  examples we encounter in this paper,  we prove that, basically, such conditions are also  sufficient.

We start with some elementary calculation involving radial functions. If $u(x)$ is a radial function and
$$ \Lambda_q (x)=\frac{1}{q} \int_{R^3}\|x-y\|^qu(y) \, dy$$
then $\Lambda_q (x)$ is also radial. Therefore we can assume that   $x=(0,0,r).$
Taking spherical coordinates $(s,\theta,\phi)$  in $y$

$$ y_1=s \sin \phi \cos \theta;\quad   y_2=s \sin \phi \sin \theta; \quad  y_3=s \cos \phi$$
we have
$$\|x-y\|^2=s^2-2r s \cos \phi + r^2.$$
Defining $b=q/2$,  we have to calculate
$$  \int_0^{\pi} ( s^2-2r s \cos \phi + r^2)^{b} \sin \phi \, d \phi$$
$$= \frac{1}{(q+2)rs} [( s +r)^{q+2}   -   |s -r|^{q+2}]. $$
If we drop the factor $2 \pi$ corresponding to the integral of $d\theta$ we get

$$\Lambda_q (r)= \int_0^{\infty} \frac{1}{q(q+2)r} [( s+r)^{q+2}  - |s -r|^{q+2}]s u(s)\, ds$$
Using a similar formula for the second term with $p\neq -2$ (if $p=-2$ a logarithm arises) and defining the function
$$K(r,s)=  \frac{1}{q(q+2)r} [( s+r)^{q+2}  - |s -r|^{q+2}]  -  \frac{1}{p(p+2)r} [( s +r)^{p+2})
  - |s -r|^{p+2}]$$
we see that the function defined by (3.3) can be written as

\begin{equation}\Lambda (r)= \int_0^{\infty} K(r,s) s u(s) \,ds.
  \end{equation}
Sometimes it is more convenient to deal with the function $w(r)=ru(r)$ and then (3.5) becomes
\begin{equation}
\Lambda (r)= \int_0^{\infty} K(r,s) w(s) \,ds.
\end{equation}

In view of those formulas for $\Lambda(r)$,  theorem  2.6  can be reformulated in the following way:
\begin{theorem}
If $u$ is a radial function belonging to ${\mathcal A}$ then $u$ is a minimizer for problem $P$
if and only if
\begin{equation}
\int_0^{\infty} \Lambda(r) v(r) r\,dr \geq \int_0^{\infty} \Lambda(r) u(r) r\,dr
\end{equation}
for any radial function $v(r)$  such that $0\leq v(r) \leq M$ and
$$ \int_0^{\infty}r^2v(r) \, dr=\int_0^{\infty}r^2u(r) \, dr.$$
\end{theorem}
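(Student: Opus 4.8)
The plan is to show that the one-variable inequality (3.7) is just a rewriting of the variational inequality in Theorem 2.6. First I would observe that Theorem 2.6 characterizes a radial minimizer $u$ by the condition
$$\int_{R^n\times R^n}K(x-y)u(x)v(y)\,dx\,dy\ \ge\ \int_{R^n\times R^n}K(x-y)u(x)u(y)\,dx\,dy$$
for every radial $v\in{\mathcal A}$. The key step is to perform the $y$-integration (or, symmetrically, the inner integration against $u$) using the reduction to radial coordinates that was carried out just before the statement: for a radial function $u$ the potential $\Lambda(x)=\int_{R^n}K(x-y)u(y)\,dy$ is radial and, up to the harmless constant factor $2\pi$ coming from the $\theta$-integration, equals $\Lambda(r)=\int_0^{\infty}K(r,s)\,s\,u(s)\,ds$. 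Hence
$$\int_{R^n\times R^n}K(x-y)u(x)v(y)\,dx\,dy = \int_{R^n}\Lambda(x)v(x)\,dx = 4\pi^2\int_0^{\infty}\Lambda(r)v(r)r^2\,dr,$$
where I have written $\mathbb{R}^n=\mathbb{R}^3$ as in this section and again collected the angular factors; the same identity with $v$ replaced by $u$ handles the right-hand side. Dividing by the positive constant $4\pi^2$ turns the inequality of Theorem 2.6 exactly into (3.7) (here one of the $r$'s has been absorbed to match the $r\,dr$ written in (3.7), i.e. the displayed $\Lambda(r)$ already carries one power of $r$ in the intended normalization, so the cleanest route is to define everything through $w(r)=ru(r)$ as in (3.6) and observe both sides scale identically).

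Next I would check that the admissible sets match. A radial $v$ lies in ${\mathcal A}$ iff $0\le v(r)\le M$ and $\int_{R^n}v(x)\,dx=m$; writing the mass constraint in radial coordinates gives $4\pi\int_0^{\infty}r^2v(r)\,dr=m$, and since $u$ itself satisfies the same relation, the constraint $\int_0^{\infty}r^2v(r)\,dr=\int_0^{\infty}r^2u(r)\,dr$ stated in the theorem is precisely equivalent to ``$v$ has the same mass $m$ as $u$''. The nonnegativity and $L^{\infty}$ bounds transfer verbatim, and membership in the weighted space $X$ is automatic for bounded radial functions with the prescribed mass and decay, so no function is lost or gained in passing between the two formulations.

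Finally I would combine the two observations: $u$ radial minimizes $P$ $\iff$ (by Theorem 2.6) the $n$-dimensional inequality holds for all radial $v\in{\mathcal A}$ $\iff$ (by the Fubini/radial-reduction identity above and the constraint translation) inequality (3.7) holds for all radial $v$ with $0\le v\le M$ and the same second moment as $u$. This is what was to be proved. The only point that needs a little care — the ``main obstacle'', though it is really bookkeeping rather than a genuine difficulty — is tracking the powers of $r$ and the constant factors ($2\pi$ from $d\theta$, $4\pi$ versus $4\pi^2$, and the extra $r$ hidden in the definition of $\Lambda(r)$ in (3.5)–(3.6)), so that the weight in (3.7) comes out as $r\,dr$ rather than $r^2\,dr$; using $w(r)=ru(r)$ and formula (3.6) throughout makes this transparent and shows both sides of (3.7) are multiplied by the same positive constant when one unwinds back to the original integrals.
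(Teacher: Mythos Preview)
Your approach is correct and is exactly what the paper intends: Theorem~3.1 is presented there simply as a reformulation of Theorem~2.6 via the radial-coordinate identities (3.5)--(3.6), and you carry out precisely that translation, matching the admissible classes through the mass constraint. One clarification on the bookkeeping you flag: the weight $r\,dr$ in (3.7) is a typo for $r^2\,dr$ --- this is how the paper itself uses the result throughout the proofs of Theorems~3.2--3.4 --- so there is no hidden power of $r$ to absorb into $\Lambda$; your computation $\int_{R^3}\Lambda(x)v(x)\,dx = c\int_0^\infty \Lambda(r)v(r)r^2\,dr$ is the correct one as written.
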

All proofs of the sufficient conditions we are going to give rely on theorem 3.1.

We start with the phase 3 minimizers, that is, minimizers that assume the values $M$ and zero only.

For  $M>0$ and $a>0$ we  define  a radially symmetric function $u(r)$ by
\begin{equation}
u(r)=M \quad \rm{for}\   0 \leq r \leq a ;\quad  \mbox{and} \quad u(r)=0 \quad \rm{for}\  \quad a<r,
\end{equation}
and
 $$m=\int_{R^3} u(x) \, dx= \frac{ 4\pi}{3} M a^3$$
 We will give necessary and sufficient conditions for $u$ to be a minimizer for particular exponents.
  To start with we give a necessary and sufficient conditions in terms of the function $\Lambda (r)$.

 \begin{theorem} Let $M,a>0$ and $u(r)$,  $\Lambda (r)$ and $m$ as above. Then $u$ is a minimizer for problem $P$  if and only if the following condition holds
 \begin{equation}
 \Lambda (r) \leq \Lambda (a)\quad   \mbox {for}\quad   0\leq r \leq a\quad  \mbox{and}\quad   \Lambda (a) \leq \Lambda (r) \quad \mbox {for} \quad  a \leq r.
 \end{equation}
\end{theorem}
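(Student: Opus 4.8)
The plan is to obtain both implications from the radial variational characterization of Theorem~3.1, i.e.\ from the fact that a radial $u\in\mathcal A$ is a minimizer for $P$ exactly when
$$\int_0^{\infty}\Lambda(r)\bigl(v(r)-u(r)\bigr)r^2\,dr\ \geq\ 0$$
for every radial $v$ with $0\leq v\leq M$ and $\int_0^{\infty}r^2v(r)\,dr=\int_0^{\infty}r^2u(r)\,dr$ (recall that the standing hypotheses $n=3$, $-n<p<0$, $2\leq q\leq 4$ of the section are in force, so Theorem~3.1 applies). Two preliminary observations drive everything. First, the special shape of $u$ fixes the sign of any competitor: since $u\equiv M$ on $[0,a]$ and $u\equiv 0$ on $(a,\infty)$, an admissible $v$ satisfies $v-u\leq 0$ on $[0,a]$ and $v-u\geq 0$ on $(a,\infty)$, while the mass constraint forces $\int_0^{\infty}\bigl(v(r)-u(r)\bigr)r^2\,dr=0$. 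Second, for this $u$ one has $\Lambda(r)=M\int_0^{a}K(r,s)\,s\,ds$, which is continuous in $r$ on $[0,\infty)$: the $1/r$ factor in $K(r,s)$ produces only a removable singularity at $r=0$, and $|s-r|^{p+2}$ is locally integrable because $p>-n$.

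Sufficiency of the asserted condition is then a one-line sign computation. Assume $\Lambda(r)\leq\Lambda(a)$ for $0\leq r\leq a$ and $\Lambda(r)\geq\Lambda(a)$ for $r\geq a$. On $[0,a]$ both $\Lambda(r)-\Lambda(a)$ and $v(r)-u(r)$ are $\leq 0$, and on $(a,\infty)$ both are $\geq 0$; in either case $\Lambda(r)\bigl(v(r)-u(r)\bigr)\geq\Lambda(a)\bigl(v(r)-u(r)\bigr)$. Integrating over $(0,\infty)$ against $r^2\,dr$ and using the mass constraint,
$$\int_0^{\infty}\Lambda(r)\bigl(v(r)-u(r)\bigr)r^2\,dr\ \geq\ \Lambda(a)\int_0^{\infty}\bigl(v(r)-u(r)\bigr)r^2\,dr\ =\ 0 ,$$
so $u$ is a minimizer by Theorem~3.1.

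For the converse I would argue by contraposition, using localized mass-transport perturbations. Suppose (3.9) fails; then either there is $r_1\in[0,a)$ with $\Lambda(r_1)>\Lambda(a)$, or there is $r_2\in(a,\infty)$ with $\Lambda(r_2)<\Lambda(a)$; the two cases being symmetric, take the first. Using continuity of $\Lambda$, choose a thin annulus $A\subset[0,a)$ around $r_1$ and a thin annulus $B=(a,a+\delta)$ so small that the $r^2$-weighted averages $\overline{\Lambda}_A$ of $\Lambda$ over $A$ and $\overline{\Lambda}_B$ over $B$ satisfy $\overline{\Lambda}_A>\overline{\Lambda}_B$ (possible since $\overline{\Lambda}_A\to\Lambda(r_1)$ and $\overline{\Lambda}_B\to\Lambda(a)$ as the annuli shrink, and $\Lambda(r_1)>\Lambda(a)$). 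Put $v=u-\varepsilon c_A\mathbf{1}_A+\varepsilon c_B\mathbf{1}_B$, where $c_A\int_A r^2\,dr=c_B\int_B r^2\,dr$ so that $v$ has the same mass as $u$, and $\varepsilon>0$ is small enough that $0\leq v\leq M$ (on $A$ we lower the value $M$, on $B$ we raise the value $0$). Then $v$ is admissible and
$$\int_0^{\infty}\Lambda(r)\bigl(v(r)-u(r)\bigr)r^2\,dr\ =\ \varepsilon\Bigl(c_A\int_A r^2\,dr\Bigr)\bigl(\overline{\Lambda}_B-\overline{\Lambda}_A\bigr)\ <\ 0 ,$$
contradicting Theorem~3.1; hence (3.9) must hold. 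Equivalently, Theorem~3.1 says that $u$ minimizes the linear functional $w\mapsto\int_0^{\infty}\Lambda(r)\,w(r)\,r^2\,dr$ over $\{\,0\leq w\leq M,\ \int_0^{\infty}r^2w\,dr=m/(4\pi)\,\}$, so the bathtub principle forces $u$ to coincide with $M\mathbf{1}_{\{\Lambda<\mu\}}$ off the level set $\{\Lambda=\mu\}$; comparing with $u=M\mathbf{1}_{[0,a]}$ and using continuity of $\Lambda$ identifies $\mu=\Lambda(a)$ and yields (3.9). The only genuine work is in this converse — producing the perturbations, checking $0\leq v\leq M$ for small $\varepsilon$, and the limiting argument through continuity of $\Lambda$, which is also what lets one pass from the one-sided information on $(a,\infty)$ and on $[0,a)$ to the equality value $\Lambda(a)$ at the endpoint; the sufficiency half is essentially trivial.
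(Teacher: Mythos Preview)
Your proof is correct and follows essentially the same route as the paper's. The sufficiency is exactly the paper's computation, just compressed: the paper writes out a chain of inequalities that, when read carefully, is your single sign observation $(\Lambda(r)-\Lambda(a))(v(r)-u(r))\geq 0$ integrated against $r^2\,dr$; for the necessity the paper also builds an annulus mass-transport competitor, the only cosmetic difference being that it moves a full block of height $M$ between two annuli of equal three-dimensional measure, whereas you move $\varepsilon$-amounts with matched $r^2$-weighted mass (and add the bathtub-principle remark, which the paper does not make).
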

\begin{proof}  Suppose the condition (3.9) holds. Then for any radially symmetric function $v(r)$ satisfying
$$ 0\leq v(r) \leq M \mbox{ and } \int_0^{\infty} v(r) r^2 \,dr=\int_0^{\infty} u(r) r^2 \,dr=M \int_0^a r^2\,dr$$
we have

$$\int_0^{\infty} \Lambda (r) r^2 v(r) \, dr-\int_0^{\infty} \Lambda (r) r^2 u(r) \, dr=$$

$$\int_0^{\infty} \Lambda (r) r^2 v(r) \, dr -M \int_0^R \Lambda(r)  r^2 \,dr   =$$
 $$\int_0^a \Lambda (r) v(r) r^2 \,dr + \int_a^{\infty}  \Lambda (r) v(r) r^2 \,dr-M \int_0^a \Lambda(r) r^2 \,dr \geq $$
 $$ \geq \int_0^a \Lambda (r) v(r) r^2 \,dr+ \Lambda (a)\int_a^{\infty} v(r) r^2 \,dr-M \int_0^R \Lambda(r) r^2 \,dr =$$
$$=\int_0^a \Lambda (r) v(r) r^2 \,dr+ \Lambda(a) (\int_0^{\infty} v(r) r^2 \,dr- \int_0^a   v(r) r^2 \,dr)-M \int_0^a \Lambda (r) r^2 \,dr= $$
$$\int_0^a \Lambda(r) v(r) r^2 \,dr-\Lambda(a)\int_0^a   v(r) r^2 \,dr +\Lambda(a) M \int_0^R r^2 \, dr -M \int_0^a \Lambda (r) r^2 \,dr= $$

$$\int_0^a (\Lambda (r)-\Lambda (a)) v(r) r^2 \,dr +M \int_0^a(\Lambda (a)- \Lambda(r)) r^2 \,dr\geq$$

$$M\int_0^a (\Lambda (r)-\Lambda (a))  r^2 \,dr  +M \int_0^a(\Lambda (a)- \Lambda(r)) r^2 \,dr=0.$$
Therefore, as a consequence of theorem 3.1, $u$ is a minimizer.
Conversely, suppose there is $r_0<R$ such that $\Lambda (r_0)>\Lambda(a)$. It is easy to see that there
are  real numbers
$r_1<r_2<a<r_3<r_4$ such that $\Lambda(r)>\Lambda(s)$ for $r_1<r<r_2$ and $r_3<s<r_4$ and the regions
$r_1\leq \|x\| \leq r_2$ $r_3\leq \|x\| \leq r_4$ have the same measure. If we define $v(r)$ by
$$v(r)=M\quad  \rm{for}\quad     0 \leq r \leq r_1\quad  \rm{or}\quad   r_2 \leq r \leq R\quad  \rm{or} \quad  r_3\leq r \leq r_4$$ and zero otherwise we
see that
$$\int_0^{\infty} \Lambda (r)v(r)r^2 \,dr < \int_0^{\infty} \Lambda(r)u(r)r^2 \,dr$$
 and, again as a consequence of theorem  3.1, $u$ is not a minimizer.

The other case is treated similarly and the theorem is proved.
\end{proof}
First we apply theorem 3.2 to the pair $ p=-1, q=2$. In that case,
$$K(r,s)=\frac{1}{r} \{\frac{1}{8}[(s+r)^4-(s-r)^4]+[s+r-|s-r|]\}$$
and
$$\Lambda(r)= M \int_0^a K(r,s)s \, ds.$$
Performing the calculation  and dropping the factor $ M$ we get

$$\Lambda (r) =(3a^5 + 5a^3r^2 + 15a^2 - 5r^2)/15 \quad \rm{for} \quad 0\leq r \leq a $$
$$\Lambda(r)= a^3( 3a^2r+ 5r^3+10)/(15r) \quad \rm{for} \quad a<r.$$
and then
$$ \Lambda(r)-\Lambda(a)= - (a^2 + a + 1)(a + r)(a - r)(a - 1)/3 \quad  \mbox {for}\quad   0\leq r \leq a$$
and
$$\Lambda(r)-\Lambda(a)= - (a^2r + ar^2 - 2)(a - r)a^2/(3r)\quad   \mbox {for}\quad   a\leq  r $$

 Elementary calculation shows that the conditions of theorem 3.2  are satisfies if and only if $a \geq 1.$ In terms of $M$ and $m$ this
is equivalent to $m/M \geq 4\pi/3.$ This agrees with \cite{choksi1}.

For $p=-1; q=3$   we have

$$K(r,s)=\frac{1}{r} \{\frac{1}{15}[(s+r)^5-|s-r|^5]+[s+r-|s-r|]\}$$

$$\Lambda(r)= (35a^6 + 105a^4r^2 + 21a^2r^4 + 315a^2 - r^6 - 105r^2)/315\quad \rm{for}
 \quad 0\leq r \leq a$$
and
$$\Lambda(r) = 2a^3(3a^4 + 42a^2r^2 + 35r^4 + 105)/(315r) \quad \mbox {for} \quad a<r. $$

Therefore,
$$ \Lambda(r)- \Lambda(a)= - (125a^4 + 20a^2r^2 - r^4 - 105)(a + r)(a - r)/315 \quad \mbox{for} \quad 0\leq r \leq a  $$
and
$$\Lambda(r)- \Lambda(a)=2(3a^4 - 77a^3r - 35a^2r^2 - 35ar^3 + 105)(a - r)a^2/(315r) \quad \mbox{for} \quad a<r $$

Working with these inequalities, we see that the conditions  of theorem 3.2  are satisfies if and only if $a^4  \geq 21/25.$ In terms of the ratio  $m/M$ that means
$$\frac{m}{M} \geq \frac{4\pi}{3} \left (\frac{21}{25}\right)^{3/4}=4\pi \times 0.292.$$

If $p=-1$ and $ q=4$ we have

$$K(r,s)=\frac{1}{r} \{\frac{1}{24}[(s+r)^6-(s-r)^6]+[s+r-|s-r|]\}$$

$$\Lambda(r)  =(3a^7 + 14a^5r^2 +7a^3r^4 +42a^2  - 14r^2 )/42 \quad \rm{for} \quad 0\leq r \leq a $$
and
$$\Lambda(r) =a^3( 3a^4r +14a^2r^3 + 7r^5  + 28)/(42r) \quad \rm{for} \quad  a<r  $$

Moreover

$$ \Lambda(r)- \Lambda(a)=  - (3a^5 + a^3r^2 - 2)(a + r)(a - r)/6
\quad \rm{for} \quad 0\leq r \leq a $$
and

$$ \Lambda(r)- \Lambda(a)=  - (3a^4r + 3a^3r^2 + a^2r^3 + ar^4 - 4)(a - r)a^2/(6r)
\quad \rm{for} \quad  a<r  $$

and the  conditions  of theorem 3.2  are satisfies if and only if $a^5  \geq 2/3.$ This is equivalent to say that
$$ \frac{m}{M} \geq \frac{4\pi}{3} \left(\frac{2}{3}\right)^{3/5}.$$
If we define a number $b_0$ by $b_0^5=2/3$, we see that $b_0$ is the radius of the phase 3 solution for the critical ratio and the critical ratio can be written as $\frac{4\pi}{3}b_0^3.$

Next we find phase 1 minimizers and we start with a sufficient condition.

\begin{theorem}  Let $u$  be a radially symmetric function  such that
$$ 0<u(r)<M \quad \, \mbox{for}\quad    0 <r<a\quad  \mbox{and}\quad  u=0 \quad \mbox{for} \quad  r\geq a$$
and for some $\eta>0$ we have
$$ \Lambda(r)=\eta \quad   \mbox {for} \quad 0\leq r \leq a \quad\mbox{and} \quad  \Lambda(r)\geq \eta \quad \mbox {for} \quad  a \leq r.$$
 Then $u$ is a minimizer.
\end{theorem}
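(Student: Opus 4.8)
The plan is to deduce the statement directly from Theorem 3.1, following the scheme of the proof of Theorem 3.2 but with a shorter computation: here $\Lambda$ is \emph{identically} equal to the constant $\eta$ on the support $[0,a]$ of $u$, rather than merely bounded above by $\Lambda(a)$ there, which makes the comparison argument immediate.

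First I would fix an arbitrary radial competitor $v(r)$ with $0\le v(r)\le M$ and $\int_0^{\infty} r^2 v(r)\,dr=\int_0^{\infty} r^2 u(r)\,dr$, and consider the difference
$$
D:=\int_0^{\infty}\Lambda(r)v(r)r^2\,dr-\int_0^{\infty}\Lambda(r)u(r)r^2\,dr ,
$$
whose nonnegativity, by Theorem 3.1, is equivalent to $u$ being a minimizer for problem $P$. If the first integral is $+\infty$ there is nothing to prove, so I may assume both integrals are finite.

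Next I would split each integral at $r=a$. Since $u\equiv 0$ on $[a,\infty)$ and $\Lambda\equiv\eta$ on $[0,a]$, the second integral equals $\eta\int_0^{a} r^2 u(r)\,dr=\eta\int_0^{\infty} r^2 u(r)\,dr$. For the first integral I again use $\Lambda\equiv\eta$ on $[0,a]$ and the hypothesis $\Lambda(r)\ge\eta$ on $[a,\infty)$ together with $v\ge 0$, obtaining
$$
\int_0^{\infty}\Lambda(r)v(r)r^2\,dr\ \ge\ \eta\int_0^{a} r^2 v(r)\,dr+\eta\int_a^{\infty} r^2 v(r)\,dr\ =\ \eta\int_0^{\infty} r^2 v(r)\,dr .
$$
Combining the two, $D\ge \eta\bigl(\int_0^{\infty} r^2 v(r)\,dr-\int_0^{\infty} r^2 u(r)\,dr\bigr)=0$ by the mass constraint, and Theorem 3.1 yields the conclusion.

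There is essentially no hard step here; the only points deserving a word of care are the reduction to competitors of finite energy (handled by the trivial-inequality remark above) and the fact that $u\in\mathcal{A}$, so that Theorem 3.1 applies — the latter being part of the standing hypotheses on $u$. In contrast with Theorem 3.2, only the sufficient direction is asserted, so no construction of a destabilizing competitor $v$ is required.
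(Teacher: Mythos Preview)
Your proof is correct and is essentially identical to the paper's own argument: both split the integrals at $r=a$, use $\Lambda\equiv\eta$ on $[0,a]$ and $\Lambda\ge\eta$ on $[a,\infty)$ together with $v\ge 0$, and conclude $D\ge\eta\bigl(\int_0^\infty r^2v\,dr-\int_0^\infty r^2u\,dr\bigr)=0$ via the mass constraint and Theorem~3.1. Your additional remarks on infinite-energy competitors and on why only the sufficient direction is needed are fine but not present in the paper's terse version.
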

\begin{proof} For any $v(r) $ satisfying $0 \leq v(r) \leq M$ and
$$ \int_0^{\infty} r^2v(r) \, dr= \int_0^{\infty} r^2u(r) \, dr$$
we have
$$ \int_0^{\infty} \Lambda(r) r^2v(r) \, dr -\int_0^{\infty} \Lambda(r)  r^2u(r) \, dr=$$
$$\int_0^a \Lambda(r) r^2v(r) \, dr  +\int_a^{\infty} \Lambda(r) r^2v(r) \, dr   -\int_0^a \Lambda(r)  r^2u(r) \, dr\geq $$

$$\eta  \int_0^a r^2v(r) \, dr  +\eta \int_a^{\infty} r^2v(r) \, dr   -\eta \int_0^a  r^2u(r) \, dr=0$$
and, as a consequence of theorem 3.1, the theorem is proved.

\end{proof}
In the cases we are going to consider, the phase 1 minimizers are of type given by the theorem 3.3.

We  take $p=-1$ and $q=2$, and suppose we want to find a minimizer $u(r)$  that fits in theorem 3.3. If, as before, we denote by $w(r)$ the function $ w(r)=ru(r)$, then the condition $ \Lambda(r)= \eta$ for
$0\leq r \leq a$ becomes:

 $$\int_0^a \{\frac{1}{8}[(s+r)^4-(s-r)^4]+[s+r-|s-r|]\}w(s)\,ds=\eta r$$

or
 \begin{equation}
 \int_0^r \{\frac{1}{8}[(s+r)^4-(s-r)^4]+2r\}w(s) \,ds
  +\int_r^a \{\frac{1}{8}[(s+r)^4-(s-r)^4]+[2s]\}w(s) \, ds =\eta r.
  \end{equation}
  If we define
  $$ z(r)=r\int_0^rw(s) \,ds + \int_r^a s w(s) \,ds$$
  and we assume that $w(s)$ is continuous in some interval, then $z''(r)=w(r)$. Having that in mind and  differentiating (3.10) two times with respect to $r$ get $w(r)=M_1r$, where $M_1$ is a constant.
  We conclude that  $u(r)=w(r)/r=M_1$ is constant. Next we show that such functions satisfy the conditions of theorem 3.3.

  In fact, if we define
  $$u(r)=M_1\quad  \mbox {for}\quad   0 \leq r \leq a\quad  \mbox{and}\quad  u(r)=0 \quad \mbox {for}\quad   a<r$$
  then for $0\leq r \leq a$ we have
  $$\Lambda(r)=\frac{1}{r} \int_0^a \{\frac{1}{8}[(s+r)^4-(s-r)^4]+[s+r-|s-r|]\}w(s) \,ds.$$
Therefore,  for  $0\leq r \leq a$, $\Lambda(r)$ is given by

$$\Lambda(r)= \frac{1}{r} \int_0^r \{\frac{1}{8}[(s+r)^4-(s-r)^4]+2s\}w(s) \,ds$$
$$+ \frac{1}{r} \int_r^a \{\frac{1}{8}[(s+r)^4-(s-r)^4]+2r\}w(s) \,ds$$
and
$$\Lambda(r)=\frac{1}{r} \int_0^a \{\frac{1}{8}[(s+r)^4-(s-r)^4]+2s\}w(s) \,ds\quad \mbox{for} \quad a<r.$$
Performing the calculation for $0\leq r \leq a$ we get
$$\Lambda(r)=(3a^5 + 5a^3r^2 + 15a^2 - 5r^2)M_1/15$$
and then $\Lambda(r)$ is constant if $a=1.$ In that case,
$$\Lambda(r)=M_1(5r^3 + 3r + 10)/(15r)\quad \mbox{for} \quad  1<r$$
and
$$ \Lambda(r)-\Lambda (1)= ((r + 2)(r - 1)^2M_1)/(3r)>0 \quad  \mbox {for}\quad   r>1$$
and this implies that $u(r)$ is the minimizer. As far the ratio $m/M$ is concerned we have
$$ \frac{m}{M}= \frac{1}{M_1}4\pi  \int_0^1 M_1 r^2 \, dr= \frac{4 \pi}{3}.$$
We conclude that if the ratio $m/M$ is less or equal to $\frac{4 \pi}{3}$, then $u(r)$ is the minimizer. Since there is no gap between the critical ratio for phase 1 and critical ratio for phase 3, we conclude, as it is well known (see \cite{burchard}), that there no phase 2 minimizers. However, as we will see, things are different if $q=3$ or $q=4.$

Now we construct phase 1 minimizers in the case $p=-1$ and $q=3$ and  we  impose

\begin{equation}
\int_0^a \{\frac{1}{15}[(s+r)^5-|s-r|^5]+[s+r-|s-r|]\}w(s) \, ds= \eta r
\quad \mbox{for} \quad   0\leq r \leq a.
\end{equation}
  That can be written as

\begin{eqnarray}
\int_0^r \{\frac{1}{15}[(s+r)^5-(r-s)^5]+2s]\}w(s) \, ds&+&\nonumber \\
\int_r^a \{\frac{1}{15}[(s+r)^5-(s-r)^5]+2r]\}w(s) \, ds&=& \eta r.
\end{eqnarray}
If $w(s)$ is continuous in some interval and we differentiate this last equation six times with respect to $r$ we get
$ w^{(4)}(r) + 8w(r)=0$ and  then
\begin{verbatim}
w(r)=c1* e**(p*r)*cos(p*r)+ c2*e**(-p*r)*cos(p*r)  + c3*e**(p*r)*sin(p*r)
+c4*e**(-p*r)*sin(p*r)
\end{verbatim}
with $p^4=2.$ If we replace this formula back  in (3.12), the left hand side is a polynomial of degree five in $r.$

Looking at the coefficients of $r^5,r^4$ and $r^2$ we get $ c_4=c_3; \quad c_2=-c_1$ and
\begin{verbatim}
c3:=(c1*(e**(2*a*p)*cos(a*p) + cos(a*p) +
e**(2*a*p)*sin(a*p) - sin(a*p)))/(e**(2*a*p)*cos(a*p) +
cos(a*p) - e**(2*a*p)*sin(a*p) + sin(a*p)).
\end{verbatim}

If we define $c=ap$, the coefficient of $r^3$ gives the following equations for $c$
\begin{verbatim}
 - 4*e**(2*c)*cos(c)**2 - 4*e**(2*c)*cos(c)*sin(c)*c + e**(4*c)*c -
e**(4*c) + 2*e**(2*c) - c - 1=0.
\end{verbatim}

Solving numerically we get $c\cong 1.09.$ Since  $p=2^{1/4}\cong 1.189$, we also have $a=c/p=0.916.$
Therefore we have
\begin{verbatim}
w(r)=c1* (e**(p*r)*cos(p*r)-e**(-p*r)*cos(p*r))  +
 c3*(e**(p*r)*sin(p*r)+e**(-p*r)*sin(p*r))
\end{verbatim}
and
\begin{verbatim}
w(r)/c1= e**(p*r)*cos(p*r)-e**(-p*r)*cos(p*r)  +
c5*(e**(p*r)*sin(p*r)+e**(-p*r)*sin(p*r))
\end{verbatim}
where $c_3$ is given in terms of $c_1$ by the formula above  and $c_5=c_3/c_1.$
Then, for $0\leq r \leq a$ the minimizer is given by  $u(r)=w(r)/r.$

If we plot $w_1=w(r)/c_1$ (see Plot1 ) we find that  it is negative and this means that $c_1$ has to be taken negative.
If we denote by $du$ the derivative of  $u(r)/c_1$ (see Plot2)  we find that  it is also negative. We conclude that the maximum of $u(r)$ is assumed at
$r=a$ and its value is $- 13.55c_1.$ To calculate the ratio $m/M$ we have to calculate the integral
$ 4\pi \int_0^a rw(r) \, dr=4\pi \times 0.22.$ Since the critical ratio for phase 3 minimizers is
$4\pi \times 0.292$, we see that there is a gap between those critical ratios.Probably this gap is filled by phase 2 minimizers. This question will be fully treated in the case $p=-1$ and $q=4.$ But, before that,
we have still  to verify that  $ \Lambda(r)- \eta= \Lambda (r) - \Lambda (a) \geq 0$ for $a\leq r$. After factorization and cancelation, that is equivalent to say that a certain polynomial
$p_1(r)= w_3r^3+ w_2r^2+w_1r+w_0 \geq 0$ for $r \geq a.$ The coefficients depend on $p$ and $c$ only and  their numerical value  can be  calculated explicitly. The answer is
$$w_0= -1877.39;\quad w_1=1769.62;\quad w_2=714.32; \quad w_3=654.29.$$  Since $w_1,w_2$ and $w_3$ are positive, the polynomial is monotonic in $r$ and then, to show it is positive for  $r \geq a$ it is sufficient
to show it is positive for $r=a$. If we do that, we find $p_1(a)=845.80$ Therefore all conditions
in theorem are verified and the solution we have found is indeed the minimizer.

Now we construct phase 1 minimizers  in the case  case $p=-1, q=4$ The condition $ \Lambda(r)=\eta$ for $0\leq r \leq a$ becomes
\begin{eqnarray}
 \int_0^r\{\frac{1}{24}[(s+r)^6-(s-r)^6]+2s\}w(s) \, ds&+& \nonumber \\
\int_r^a\{\frac{1}{24}[(s+r)^6-(s-r)^6]+2r\}w(s) \, ds&=& \eta r.
\end{eqnarray}

If we assume that the function $w(r)$ is continuous in some interval and we differentiate this last equality twice with respect to $r$, we get that $w(r)$ is a polynomial of third degree in $r$. Since we already know the answer, we set $w(r)=a_3r^3+a_1r$ so that $u(r)=w(r)/r=a_3r^2+a_1$. Putting  this $w(r)$ back into equation (3.13), we find

$$a_1=(3a_3( - a^5 + 1))/(5a^3);\quad   (4a^{10} + 42a^5 - 21)=0;\quad
  \eta =(2a_3( - 2a^5 + 7))/(21a).$$
  To meet the second condition we choose   $a=a_0$ where $a_0$
  is  the positive solution of
  \begin{equation}
  (4a_0^{10} + 42a_0^5 - 21)=0; \quad a_0^5=(5\sqrt(21)-21)/4 \cong 0.4782<1.
   \end{equation}
   For $r>a_0$ we also have $ \Lambda (r)-\Lambda(a_0)=$
  \begin{verbatim}
  ( - (8*a**9*r + 8*a**8*r**2 + 63*a**4*r + 63*a**3*r**2 + 21*a**2*r**3 +
21*a*r**4 - 84)*(a - r)*a3)/(210*a*r).
  \end{verbatim}

Since $a_3$ will be positive, we need the expression
\begin{verbatim}
p_2(r)=8*a**9*r + 8*a**8*r**2 + 63*a**4*r + 63*a**3*r**2 + 21*a**2*r**3 +
21*a*r**4 - 84
\end{verbatim}
to be positive for $r>a_0$. Clearly   $p_2'(r)$ is positive and $p_2(a_0)=4(4a_0^{10} + 42a_0^5 - 21)=0$
in view of  the definition (3.14)  of $a_0$. We conclude that  all conditions of theorem 3.3 are satisfied and
$u(r)= (a_3( - 3a_0^5 + 5a_0^3r^2 + 3))/(5a_0^3)$  is the minimizer, where $a_3>0$ is free. This function is increasing and
$u(0)=3(1-a_0^5)>0.$ Then $u(r)>0$ for $0\leq r \leq a_0$ and it achieves its maximum at $r=a_0$ and $u(a_0)=(a_3(2a_0^5 + 3))/(5a_0^3).$ Moreover,
$$m= 4\pi \int_0^{a_0} r^2 u(r) \, dr= 4\pi a_3/5$$
and then
$$\frac{m}{M}=4 \pi (2a_0^5 + 3))/(a_0^3).$$
We conclude that $u(r)$ is a minimizer provided for given $m$ and $M$,
we have $$\frac{m}{M}\leq 4 \pi (2a_0^5 + 3))/(a_0^3).$$
In that case, we choose $a_3=5m/4 \pi.$

Finally we construct  phase 2 minimizers for $p=-1$ and $q=4$. First we prove a sufficient condition for the minimizers we are going to construct.

\begin{theorem}  Let $u$  be a radially symmetric function  such that
$$ 0<u(r)<M \quad \, \mbox{for}\quad    0 <r<a;\quad  u(r)=M \quad \mbox{for} \quad a \leq  r\leq b \quad \mbox{and}\quad  u=0 \quad \mbox{for} \quad  r\geq b.$$
Suppose that  for some $\eta>0$ we have
$$ \Lambda(r)=\eta \quad   \mbox {for} \quad 0\leq r \leq a;  \quad \Lambda(r)\leq \eta \quad \mbox {for} \quad  a\leq r \leq b \quad \mbox {and}\quad  \Lambda(r)\geq \eta \quad \mbox {for} \quad  r \geq b.$$
Then $u$ is a minimizer.
\end{theorem}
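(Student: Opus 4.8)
The plan is to argue exactly as in the proofs of theorems 3.2 and 3.3, using theorem 3.1 as the criterion: it suffices to show that for every radially symmetric $v$ with $0\le v(r)\le M$ and $\int_0^{\infty} r^2 v(r)\,dr=\int_0^{\infty} r^2 u(r)\,dr$ one has
$$\int_0^{\infty}\Lambda(r) r^2 v(r)\,dr \ \ge\ \int_0^{\infty}\Lambda(r) r^2 u(r)\,dr .$$
First I would split each of these integrals over the three shells $[0,a]$, $[a,b]$ and $[b,\infty)$ dictated by the hypotheses on $u$ and on $\Lambda$, and estimate the integrand $\Lambda(r) r^2 (v(r)-u(r))$ pointwise on each shell.

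On $[0,a]$ one uses $\Lambda(r)=\eta$, so that $\Lambda(r)(v(r)-u(r))=\eta(v(r)-u(r))$. On $[a,b]$ one uses $u(r)=M$ together with $\Lambda(r)\le\eta$ and $v(r)-M\le 0$; multiplying the inequality $\Lambda(r)\le\eta$ by the nonpositive quantity $v(r)-M$ reverses it and yields $\Lambda(r)(v(r)-u(r))=\Lambda(r)(v(r)-M)\ge\eta(v(r)-M)=\eta(v(r)-u(r))$. On $[b,\infty)$ one uses $u(r)=0$ together with $\Lambda(r)\ge\eta$ and $v(r)\ge 0$ to get $\Lambda(r)v(r)\ge\eta v(r)$, i.e. again $\Lambda(r)(v(r)-u(r))\ge\eta(v(r)-u(r))$. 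Adding these three pointwise bounds and integrating against $r^2\,dr$ gives
$$\int_0^{\infty}\Lambda(r) r^2 (v(r)-u(r))\,dr \ \ge\ \eta\int_0^{\infty} r^2(v(r)-u(r))\,dr \ =\ 0,$$
the last equality being the mass constraint. By theorem 3.1, $u$ is a minimizer.

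There is no serious obstacle; the only point demanding a little care is the sign bookkeeping on the middle shell $[a,b]$, where one must notice that $v-M$ has a definite sign, so that the ``wrong-way'' inequality $\Lambda\le\eta$ there in fact works in our favour once multiplied by $v-M$. This is the same mechanism already used for phase 1 in theorem 3.3, the only new feature being the extra shell on which $u$ saturates the constraint $u=M$, handled by the inequality $\Lambda\le\eta$.
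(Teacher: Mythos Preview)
Your proof is correct and follows essentially the same route as the paper: split the integral $\int_0^\infty \Lambda(r)r^2(v(r)-u(r))\,dr$ over the three shells $[0,a]$, $[a,b]$, $[b,\infty)$, use the pointwise sign information on each shell (in particular the observation that on $[a,b]$ one has $(\eta-\Lambda(r))(M-v(r))\ge 0$), and invoke the mass constraint to conclude via theorem~3.1. Your write-up is in fact cleaner than the paper's, whose displayed computation contains several misprints, but the underlying argument is identical.
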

\begin{proof} For any $v(r) $ satisfying $0 \leq v(r) \leq M$ and
$$ \int_0^{\infty} r^2v(r) \, dr= \int_0^{\infty} r^2u(r) \, dr$$
we have
$$ \int_0^{\infty} \Lambda(r) r^2v(r) \, dr -\int_0^{\infty} \Lambda(r)  r^2u(r) \, dr=$$
$$\int_0^a \Lambda(r) r^2v(r) \, dr +\int_a^b \Lambda(r) r^2v(r) \, dr $$
$$+\int_b^{\infty} \Lambda(r) r^2v(r) \, dr   -\int_0^a \Lambda(r)  r^2u(r) \, dr-\int_a^b \Lambda(r)  r^2u(r) \, dr\geq $$
$$\eta  \int_0^a r^2v(r) \, dr  +\eta \int_a^{\infty} r^2v(r) \, dr+$$
 $$\int_a^b r^2v(r) \, dr    -\eta \int_0^a  r^2u(r) \, dr-\eta \int_0^a  r^2u(r) \, dr =0$$
 $$\eta ( \int_0^b r^2u(r) \, dr-\eta  \int_a^b r^2v(r) \, dr) +$$
 $$\int_a^b r^2v(r) \, dr    -\eta \int_0^a  r^2u(r) \, dr -\eta \int_0^a  r^2u(r) \, dr =0$$
$$\int_a^b r^2(\eta -\Lambda(r))(M-v(r))r^2 \, dr \geq 0 $$
and, as a consequence of theorem 3.1, the theorem is proved.
\end{proof}
Before starting the calculation, we describe briefly how the minimizer evolves according to the ratio
$m/M.$ If we drop $4\pi$ in front of the rations, then we will show that given a ratio $k$ between the critical ratios for the phase 1 and phase 3 minimizers, that is,
$$ (2a_0^5 + 3))/a_0^3< k < \frac{1}{3} \left(\frac{2}{3}\right)^{3/5},$$
 where $a_0$ is defined by (3.14), then there are $a,b$ with $ 0<a<a_0<b<b_0$ such that
the minimizer is a polynomial $ a_3r^2+a_1$ for $0\leq r \leq a$ and $u(r)=M$ for $a< r \leq b$ and
$u(r)=0$ for $ r\geq b.$ As the ratio $k$ decreases, $a$ goes to the right, $b$ goes to the left and
as $ k$ tends to $(2a_0^5 + 3))/a_0^3$ , $a$ tends to $a_0$ and $b$ also tends to $a_0$ so that the minimizer tends to be purely the critical function. On the other way around, as  $k$ increases, $a$ goes to the left, $b$ goes to the right  and
as $ k$ tends to $\left(\frac{2}{3}\right)^{3/5} $, $a$ tends to $0$ and $b$  tends to $b_0$  so that the minimizer tends to be purely the constant function.

We start the calculations to verify the conditions in theorem 3.4.  The condition $\Lambda(r)=\eta$ for $0\leq r \leq a$ becomes
\begin{equation}
 \int_0^r\{\frac{1}{24}[(s+r)^6-(s-r)^6]+2s\}w(s) \, ds+
\int_r^a\{\frac{1}{24}[(s+r)^6-(s-r)^6]+2r\}w(s) \, ds+$$
$$M \int_a^b\{\frac{1}{24}[(s+r)^6-(s-r)^6]+2r\}s \, ds = \eta r.
\end{equation}

Moreover, we also have
$$ \Lambda(r)=\frac{1}{r}\int_0^a\{\frac{1}{24}[(s+r)^6-(s-r)^6]+2s\}w(s) \, ds$$
 $$+  M\frac{1}{r}\int_a^r\{\frac{1}{24}[(s+r)^6-(s-r)^6]+2s\}s \, ds+$$
 $$ M\frac{1}{r}\int_r^b\{\frac{1}{24}[(s+r)^6-(s-r)^6]+2r\}s \, ds$$
  $$\mbox{for}\quad   a\leq r \leq b$$
  and
$$\Lambda(r)=\frac{1}{r}\int_0^a\{\frac{1}{24}[(s+r)^6-(s-r)^6]+2s\}w(s) \, ds$$ $$+M\frac{1}{r}\int_a^b\{\frac{1}{24}[(s+r)^6-(s-r)^6]+2s\}s \, ds$$
for $r>b.$

If, as before, we set $w(r)=ru(r)=a_3r^3+a_1r$ and put it in (3.15) we get a fifth degree containing the powers 5, 3 and 1. Setting the coefficients of $r^5$ and $r^3$ equal to zero we get

\begin{verbatim}a1:=(M*(4*a**10 - 25*a**7*b**3 + 21*a**5*b**5 + 21*a**5 - 21*b**5))/(4*a**10 +
42*a**5 - 21);
a3:=(35*M*(a**5*b**3 - a**3*b**5 + a**3 - b**3))/(4*a**10 + 42*a**5 - 21);
dd1:=(M*(20*a**14*b**3 - 56*a**12*b**5 + 56*a**12 + 36*a**10*b**7 +
504*a**10*b**2 - 1190*a**9*b**3 + 252*a**7*b**5 - 252*a**7 + 378*a**5*b**7
 + 5292*a**5*b**2- 2205*a**4*b**3 - 2646*a**2*b**5 +
2646*a**2 - 189*b**7 - 2646*b**2))/(126*(4*a**10 + 42*a**5 - 21)).
\end{verbatim}
where $\eta=dd1.$

Moreover
 $$ \frac{m}{4 \pi} =\frac{1}{4 \pi} \int_{R^3} u(x) \, dx= (a_3 \int_0^ar^4 \,dr + a_1 \int_0^a r^2 \, dr) + M \int_a^b r^2 \,dr=$$

 $$=(7M(a^5b^3 - a^3b^5 + a^3 - b^3))/(4a^{10} + 42a^5 - 21)$$
 and then

 $$\frac{m}{4\pi M}=(7M(a^5b^3 - a^3b^5 + a^3 - b^3))/(4a^{10} + 42a^5 - 21)$$
 Since $ \Lambda(r) $ is continuous we must have $ \Lambda(b)=\eta$ and this implies:
 \begin{verbatim}
 (16*a**8*b**3 - 13*a**7*b**4 - 42*a**6*b**5 - 21*a**5*b +
42*a**4*b**7 - 42*a**4*b**2 + 21*a**3*b**8 + 63*a**3*b**3 + 63*a**2*b**4 +
126*a*b**5- 84*a + 63*b**6 - 42*b)=0.
 \end{verbatim}

 Given $m/M=k,$ the idea is to use these last two equation to find $a$ and $b$ as functions of $k$. Since it seems difficult to analyze such solution, we follow a different strategy. We consider the second equation
\begin{verbatim}
 (16*a**8*b**3 - 13*a**7*b**4 - 42*a**6*b**5 - 21*a**5*b +
42*a**4*b**7 - 42*a**4*b**2 + 21*a**3*b**8 + 63*a**3*b**3 + 63*a**2*b**4 +
 126*a*b**5- 84*a + 63*b**6 - 42*b)=0
 \end{verbatim}
and we parametrize their solutions by a parameter $0\leq t\leq  1$ putting $a=tb.$ Then we obtain   a quadratic equation in $b^5:$
\begin{equation}
d_{10} b^{10} +d_5b^5+d_0=0
\end{equation}
 where

\begin{verbatim}
d10=t**3*(16*t**5 - 13*t**4 - 42*t**3 + 42*t + 21);
d5=21*( - t**5 - 2*t**4 + 3*t**3 + 3*t**2 + 6*t + 3);
d0=42*( - 2*t - 1).
\end{verbatim}
If we plot the polynomial
$$p_2(t)=\frac{d_{10}}{t^3}=16t^5 - 13t^4 - 42t^3 + 42t + 21$$
 for $0\leq t \leq 1$ we see that it is positive (see Plot3).  Since $d_0(t)<0$,
 for $0\leq t \leq 1$, if we put $z(t)=b^5(t)$ we get
$$z(t)=\frac{-d_5+(d_5^2-4d_{10} d_0)^{1/2}}{2d_{10}}$$
or
\begin{equation}
z(t)=\frac{-2d_0}{d_5+(d_5^2-4d_{10} d_0)^{1/2}}
\end{equation}
The second formula for $z(t)$ is more convenient because $d_{10}$ vanishes at $t=0$ and then, at that value of $t$, the first formula gives $0/0.$

If we set $t=0$ in (3.16) or (3.17),  we get $b^5=2/3$ and then $b=b_0$ where $b_0$ is the radius of the critical phase 3 minimizer.  If we set  $t=1$ in (3.16)  we get $4b^{10} + 42b^5 - 21=0$ and this gives $b=a_0$, where $a_0$ given by (3.14)  is the radius of phase 3 minimizers. Therefore, we see that as the parameter $t$ goes from $0$ to $1$, $b(t)$ goes from the bottom of the interval for phase 3 minimizers to the  top of the interval defined by $a_0$ and $a(t)=tb(t)$ goes from zero to $a_0$. In fact, $b(t)$ is a decreasing function of $t$ and $a(t)$ is increasing. To see that  $b(t)$ is decreasing,  we denote by $dz$ the derivative by $z(t)$ given by (3.17)  and  plot it (see Plot4). Since $a^5(t)=t^5 b^5(t)$ to see that $a(t)$ is increasing  we plot $da=5z(t) +tz'(t)$ (see Plot5) and we see that it is positive.

Next we analyse the ratio $k(t)=m/M$ where
$$ k(t)=(7(a^5b^3 - a^3b^5 + a^3 - b^3))/(4a^{10} + 42a^5 - 21)$$
If we set $a=tb$ we get

$$ k(t)= (7(b^5t^4 + b^5t^3 + t^2 + t + 1)(t - 1)b^3)/(4b^{10}t^10 +
42b^5t^5 - 21)$$
 If we try to calculate $k(1)$ we get $0/0$ but after the cancelation of the common factor $t-1$, $k(t)$  becomes

\begin{verbatim}
k(t):= (b**3*(16*b**5*t**9 + 3*b**5*t**8 - 55*b**5*t**7 - 42*b**5*t**6 +
42*b**5*t**5 + 63*b**5*t**4 + 21*b**5*t**3 + 16*t**7 + 3*t**6 - 39*t**5
 - 55*t**4 +63*t**2 + 63*t + 21))/(3*(4*b**5*t**11 + 12*b**5*t**10 +
 32*b**5*t**9 -6*b**5*t**8 - 114*b**5*t**7 - 126*b**5*t**6 - 42*b**5*t**5 +
 16*t**7 + 24*t**6 +24*t**5 + 8*t**4 + 21*t**3 + 63*t**2 + 63*t + 21)).
\end{verbatim}

For $t=0$ we get $k(0)=b_0^3/3$ and for $t=1$ we get
$$k(1)=(a_0^3( - 2a_0^5 - 3))/(30(a_0^5 - 1))=a_0^3/(2a_0^5 + 3).$$
The last equality holds because
$$(a_0^3( - 2a_0^5 - 3))/(30(a_0^5 - 1))-a_0^3/(2a_0^5 + 3)=$$
$$=(a_0^3(4a_0^{10} + 42a_0^5 - 21))/(30(2a_0^{10} + a^5 - 3))=0$$
in view of the definition (3.14)  of $a_0$.

If we plot $dk$  the derivative of $k(t)$ we see it is negative (see Plot6).  We conclude that as $t$ goes from zero to one,
$k(t)$ goes monotonically from $b_0^2/3$ to $a_0^3/(2a_0^5 + 3)$ which are the critical ratios for phase 3 and phase 1 minimizers, respectively. Therefore, if we give a ratio $k_0$ between those values,  there is a unique $t_0$, $0<t_0<1$ such that $k(t_0)=k_0$. With $t_0$ in hands we can calculate $a=a(t_0)$ and $b=b(t_0).$ With that choice, the two equalities involving $a$ and $b$  are satisfied.

Next we work with the inequalities that appear in theorem 3.4.If we define $z=r/b$, since  $a=tb$, $a<r<b$ is equivalent to $t<z<1.$ After some factorization, we have to show that
\begin{verbatim}
tt4:= (4*b**5*t**6 + 4*b**5*t**5 + 4*b**5*t**4 - 21*b**5*t**3
 - 21*b**5*t**2- 21*b**5*t - 21*b**5 + 21*t + 21)>0
 \end{verbatim}
 for $0<t<1$ where
 $b^5=z(t)$ given by (3.17)  If we plot $tt4$ we see it is positive (see Plot7).

 As far the other inequality is concerned if we define $r=b*y$
 we have to verify that for a certain polynomial $p_3(y)$ we have
 $$ p_3(y)= ww4(y^4+y^3) + ww2(y^2+y)+ww0 >0\quad \mbox{for} \quad y \geq 1$$
  where
\begin{verbatim}
 ww4:=8*b**5*t**7 + 12*b**5*t**6 + 12*b**5*t**5 - 38*b**5*t**4
  - 63*b**5*t**3- 63*b**5*t**2 - 63*b**5*t - 21*b**5 + 42*t**2 + 63*t + 21;
ww2:=4*b**5*t**11 + 12*b**5*t**10 + 16*b**5*t**9 - 34*b**5*t**8
 - 142*b**5*t**7 - 54*b**5*t**6 + 105*b**5*t**5 + 147*b**5*t**4 +
 147*b**5*t**3 + 63*b**5*t**2 + 16*t**7 + 24*t**6 + 24*t**5 - 76*t**4
 - 126*t**3 +63*t + 21;
ww0:=2*( - 16*b**5*t**9 - 3*b**5*t**8 + 55*b**5*t**7 + 42*b**5*t**6
 - 42*b**5*t**5 - 63*b**5*t**4 - 21*b**5*t**3 - 16*t**7 - 3*t**6 + 39*t**5
  + 55*t**4 - 63*t**2 - 63*t - 21).
 \end{verbatim}
 Plotting ww4 and ww2 (see Plot8 and 9)  we see that they are positive and this implies $p_3(y)$  increases with $y$. Therefore, it is sufficient to show it is positive for $y=1$. Taking $y=1$ and plotting $p_3(1)$  (see Plot10) we see it is positive.

 We have still to verify that $M$ is indeed the maximum of $u(r).$ To do that we define
\begin{verbatim}
c1=a1/M=(4*a**10 - 25*a**7*b**3 + 21*a**5*b**5 + 21*a**5
 - 21*b**5)/(4*a**10 + 42*a**5 - 21);
c3=a3/M=35*(a**5*b**3 - a**3*b**5 + a**3 - b**3))/(4*a**10 + 42*a**5 - 21)=
=((a**4*b**3 + a**3*b**4 + a**2 + a*b + b**2)*(a - b))/(4*a**10 + 42*a**5 - 21).
\end{verbatim}

Clearly $c_3>0$ because $a<b$ and the denominator is negative. Next we show that $c_1>0$.
Since the denominator of $c_1$ is negative, we have to verify that
\begin{verbatim}
nc1=(4*a**10 - 25*a**7*b**3 + 21*a**5*b**5 + 21*a**5 - 21*b**5)
\end{verbatim}
is negative. Setting $a=tb$ and replacing $b^5$ by $z(t)$ given by (3.17)   and plotting $nc1$, we verify that $ nc1<0$
(see Plot11). We conclude that the maximum of $u(r)$ is assumed at $r=a.$

Defining $tt:=c_3a^2+c_1-1$ we have to verify that
\begin{verbatim}
tt:=( - 24*a**7*b**3 + 20*a**5*b**5 - 20*a**5 - a**2*b**3
 - 21*b**5 + 21)/(4*a**10 +42*a**5 - 21)<0.
\end{verbatim}
 Since the denominator of $tt$ is negative, we need the numerator
\begin{verbatim}
ntt :=- 24*a**7*b**3 + 20*a**5*b**5 - 20*a**5 - a**2*b**3 - 21*b**5 + 21
\end{verbatim}

 to be positive. Plotting $nt$ we see it is indeed positive (see Plot12).

 We conclude that all conditions of theorem 3.4 are satisfied and that the functions we have constructed are indeed phase 2 minimizers.

 \begin{figure}[h]
    \begin{center}
      \includegraphics[width=.4\linewidth]{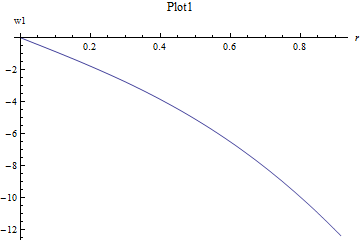}
      \hspace{.15\linewidth}
      \includegraphics[width=.4\linewidth]{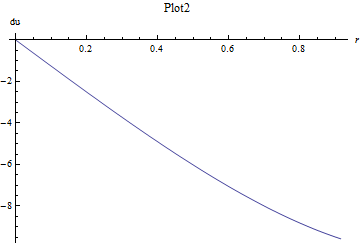}
      \\
      \includegraphics[width=.4\linewidth]{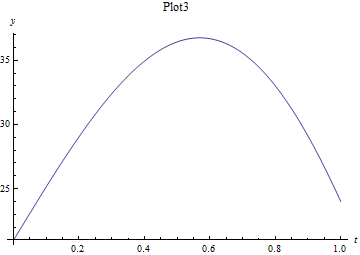}
      \hspace{.15\linewidth}
      \includegraphics[width=.4\linewidth]{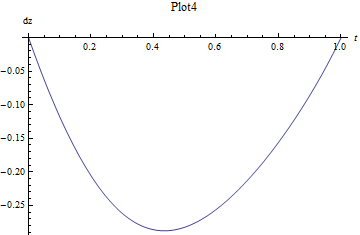}
      \\
      \includegraphics[width=.4\linewidth]{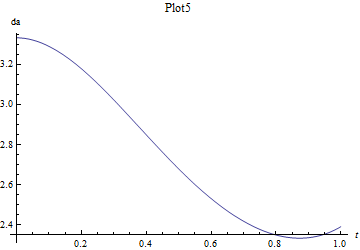}
      \hspace{.15\linewidth}
      \includegraphics[width=.4\linewidth]{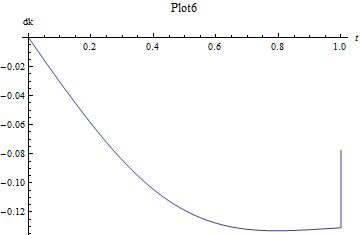}
    \end{center}
    \label{figura1}
    \caption{}
  \end{figure}

  \begin{figure}[h]
    \begin{center}
      \includegraphics[width=.4\linewidth]{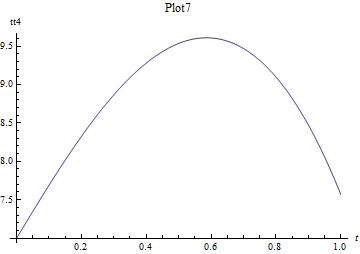}
      \hspace{.15\linewidth}
      \includegraphics[width=.4\linewidth]{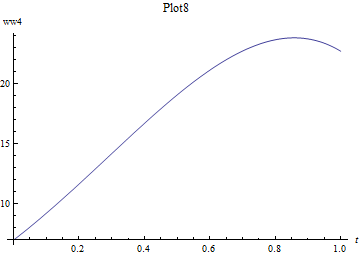}
      \\
      \includegraphics[width=.4\linewidth]{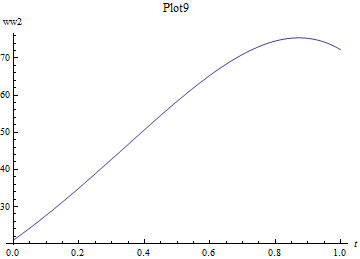}
      \hspace{.15\linewidth}
      \includegraphics[width=.4\linewidth]{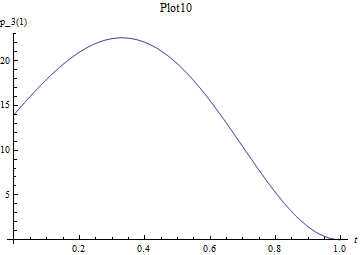}
      \\
      \includegraphics[width=.4\linewidth]{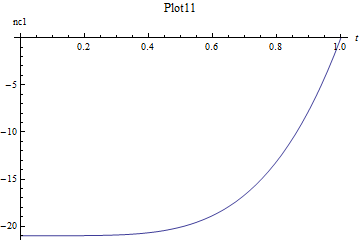}
      \hspace{.15\linewidth}
      \includegraphics[width=.4\linewidth]{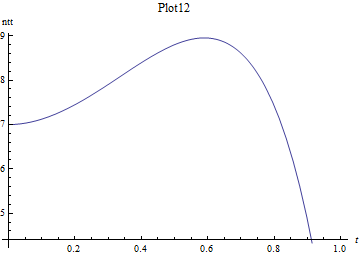}
    \end{center}
    \label{figura2}
    \caption{}
  \end{figure}

\newpage

\end{document}